\setlist[enumerate]{leftmargin=.5in}
\setlist[itemize]{leftmargin=.5in}
\crefname{hypothesis}{Hypothesis}{Hypotheses}
\crefname{fact}{Fact}{Facts}
\def\mP{{\mathcal P}}
\def\mE{{\mathcal E}}
\def\mX{{\mathcal X}}
\def\mY{{\mathcal Y}}
\def\mI{{\mathcal I}}
\def\mH{{\mathcal H}}
\def\R{{\mathbb R}}
\def\N{{\mathbb N}}
\def\NI{{\mathrm{NI}}}
\def\eps{{\varepsilon}}
 \def\1{{\mathbf 1}}
 \newcommand{\po}{\left(}
\newcommand{\pf}{\right)}
\newcommand{\co}{\left[}
\newcommand{\cf}{\right]}
\newcommand{\cco}{\llbracket}
\newcommand{\ccf}{\rrbracket}
\newcommand{\T}{\mathbb T} 
\newcommand{\dd}{\text{d}}
\newcommand{\na}{\nabla}
\title{Convergence of Time-Averaged Mean Field Gradient Descent  Dynamics for Continuous Multi-Player Zero-Sum Games\thanks{Submitted to the editors DATE.%\funding{}
}}
\author{Yulong Lu\thanks{School of Mathematics, University of Minnesota, Minneapolis, MN 55455, USA
  (\email{yulonglu@umn.edu}).}
\and Pierre Monmarch{\'e}\thanks{LJLL and LCT, Sorbonne Universit\'e, 4 place Jussieu, 75005 Paris France
  (\email{pierre.monmarche@sorbonne-universite.fr}).}}
\begin{document}

\maketitle

% REQUIRED
\begin{abstract}
The approximation of mixed Nash equilibria (MNE) for zero-sum games with mean-field interacting players has recently raised much interest in machine learning. In this paper we propose a mean-field gradient descent dynamics for finding the MNE of zero-sum games involving $K$ players with $K\geq 2$.  The evolution of the players' strategy distributions follows coupled mean-field gradient descent flows with momentum, incorporating an exponentially discounted time-averaging of gradients.  First, in the case of a fixed entropic regularization, we prove an exponential convergence rate for the mean-field dynamics to the mixed Nash equilibrium with respect to the total variation metric. This improves a previous polynomial convergence rate for a similar time-averaged dynamics with different averaging factors. Moreover, unlike previous two-scale approaches for finding the MNE, our  approach treats all player types on the same time scale. We also show that with a suitable choice of decreasing temperature, a simulated annealing version of the mean-field dynamics converges to an MNE of the initial unregularized problem.
\end{abstract}

% REQUIRED
\begin{keywords}
 Zero-sum games, Mixed Nash equilibrium, Mean-field, Gradient descent ascent%
\end{keywords}

% REQUIRED
\begin{MSCcodes}
35Q89, 49N80, 91A16, 90C47
\end{MSCcodes}

\section{Introduction}
Zero-sum minmax games are ubiquitous in machine learning. Typical examples include Generative Adversarial Networks (GANs) (\cite{goodfellow2020generative}), adversarial training (\cite{madry2018towards}) and reinforcement learning (
\cite{busoniu2008comprehensive,bucsoniu2010multi,zhang2021multi}). In this paper, 
we consider games between $K$   players with $K\geq 2$. The loss of a player  $i\in\cco 1,K\ccf$  playing strategy $x_i \in\mathcal{X}_i := \mathbb{T}^{d_i}$  against all of other players, denoted by $x_{-i} := (x_j)_{j\neq i}$,  is denoted by $U_i(x) = U_i(x_i, x_{-i}) $. Here we use $\mathbb{T}^{d_i}$ to denote the $d_i$-dimensional torus. We remark that the torus assumption, while somewhat restrictive or impractical in game theory, are indeed prevalent in recent literature on the theoretical analysis of optimization dynamics in the mean-field settings \cite{ma2021provably,domingo2020mean,lu2023two}. Although we expect majority of the results in the paper can be extended to unbounded strategy spaces when an appropriate confining potential is incorporated into the loss function, we restrict our analysis on the torus to avoid the additional technicalities of dealing with the unboundedness of the state space. 

We write $\mX=\prod_{i=1}^K\mX_i$. Throughout the paper we restrict our attention on sum of  pairwise zero-sum games, in the following sense:  there exist $W_{ij}:\mX_i\times \mX_j \mapsto \R$ with $W_{ij}=-W_{ji}$ such that
$$
    U_i(x) = \sum_{j\neq i} W_{ij}(x_i,x_j)\,.
$$
Further assumptions on $W_{ij}$ can be found in Assumption \ref{ass-multi}.   
The finite dimensional game problem seeks a (pure) Nash equilibrium $(x_1^{\ast}, \cdots, x_K^{\ast})$ such that 
\begin{equation} \label{eq:Ui}
U_i (x_i^{\ast},  x_{-i}^{\ast}) \leq U_i (x_i,  x_{-i}^{\ast}), \quad \forall i\in  \cco 1,K\ccf.    
\end{equation}
Note that in the definition of pure Nash equilibrium  we assume that the $i$-th player is aiming to minimize a loss $U_i$ (instead of maximizing a pay-off, which is perhaps more commonly used in the game literature). When $\mX_i$ are discrete sets, the resulting games reduces to the so-called {\em polymatrix games}  \cite{bregman1987methods,bergman1998separable}; see more discussions in Section \ref{sec:relatedwork}.  Algorithmic approximation of the pure Nash equilibria of losses $\{U_i\}_{i=1}^K$ is a very challenging problem when $U_i$ lacks convexity. In fact, in that case the pure Nash equilibria may not exist (\cite{Nash1951}). To avoid the ill-posed issue of the solutions, there has been a growing interest in finding the {\em mixed Nash equilibria} (\cite{glicksberg1952further}). 
To define the mixed Nash equilibrium (MNE), let $\nu^{i} \in \mathcal{P}(\mX_i)$ be a probability distribution of the mixed strategy used by the $i$-th player for   $i \in \cco 1,K\ccf$. The averaged loss of the $i$-th player against the distributions of the mixed strategies is given by 
\begin{equation}\label{eq:mne}
    \mE_i(\nu^{i}, \nu^{-i}) = \int_{\mX_{-i}} U_i(x_i, x_{-i}) \nu^{-i} (dx_{-i}),
\end{equation}
where $\mX_{-i}  = \prod_{j\neq i}^K\mX_j$ and $\nu^{-i} := \{\nu^{j}\}_{j\neq i}$. 
We say a set of measures $(\nu^{1,\ast}, \cdots, \nu^{K,\ast})$ is an MNE of the multi-player game if 
$$
\mE_i(\nu^{i,\ast}, \nu^{-i,\ast}) \leq \mE_i(\nu^{i}, \nu^{-i,\ast}), \forall  \nu^i \in \mP(\mX_i),  \forall i\in  \cco 1,K\ccf.
$$
 While MNE exists in much generality, algorithmically finding it is still highly nontrivial. Motivated by the recent line of works for two-player zero-sum games, we consider the approximation of an MNE of the game objectives $\{\mE_i\}_{i=1}^K$ with the MNE of their entropy-regularization. More precisely, we consider the game equipped with the entropy-regularized  losses: \begin{equation}\label{eq:Etau}
     \mE_{i,\tau}(\nu^i, \nu^{-i}) := \mE_{i}(\nu^i, \nu^{-i}) + \tau \mH(\nu^i), \forall i\in  \cco 1,K\ccf,
 \end{equation}
where $\mH(\nu) = \int \log \frac{d\nu}{dx} d\nu $ is the entropy of the probability measure $\nu$ and  $\tau>0$ is a finite regularization parameter (or temperature).
 Notice that for every $\tau>0$ and every fixed $\nu^{-i}$,  the functional $\mu^i \mapsto  \mathcal E_{i,\tau}(\mu^i,\nu^{-i})$ is strongly convex over $\mu^i \in \mathcal P (\mX_i)$. Moreover,  its unique minimizer is given by  
$$\rho_{U_i\star \nu^{-i}} \propto \exp(-\tau^{-1} U_i \star \nu^{-i}).
$$ 
Here we adopt the notation  $U_i \star \nu^{-i}(x_i) := \int U_i(x_i, x_{-i}) \nu^{-i}(dx^{-i})$. 
This allows us to  obtain that $\nu^*_{\tau}=(\nu^{1,\ast}_{\tau},\dots,\nu^{K,\ast}_{\tau})$ is a MNE $\{\mE_{i,\tau}\}_{i\in  \cco 1,K\ccf}$ if and only if it solves the fixed point equation
\begin{equation}\label{eq:fixedpointmulti}
    \forall i\in\cco 1,K\ccf,\qquad \nu^{i,\ast}_{\tau} = \rho_{U_i\star \nu^{-i,\ast}_{\tau}} \propto \exp \po - \tau^{-1} U_i \star \nu^{-i,\ast}_{\tau}\pf\,.
\end{equation}
Furthermore, the solution to the fixed point equation \eqref{eq:fixedpointmulti} is unique (see Proposition \ref{prop:NEmulti}). 

\paragraph{Two-player zero-sum games} When $K=2$, the games discussed above reduce to well-studied two-player zero-sum games ($K=2$). In this case,  
a two-player zero-sum game can be formulated as a minmax optimization problem of finding the Nash equilibria of a game objective function $W: \mX_1 \times \mX_2 \mapsto \R$, i.e. $\min_{x\in \mX_1}\max_{y\in \mX_2} W(x,y)$. Moreover, a  pair of probability distributions $(\nu^\ast,\mu^\ast)$ is an MNE if  \begin{equation}
\begin{aligned}
   &  \mE(\nu^\ast,\mu)\leq  \mE(\nu^\ast,\mu^\ast) \leq \mE(\nu,\mu^\ast), \quad \forall (\nu,\mu)\in  \mP(\mX_1) \times \mP(\mX_2),\\
    & \text{ where }  \mE(\mu,\nu):= \int_{\mX_1} \int_{\mX_2} W(x,y) \mu(dx)\nu(dy).
   \end{aligned}
\end{equation}
Similarly, one can define  the entropy-regularized minmax problem 
$$
\min_{\nu\in \mP(\mX_1)} \max_{\mu\in \mP(\mX_2)} \mE_{\tau}(\nu,\mu), \text{ where } \mE_{\tau}(\nu,\mu):= \mE(\nu,\mu) + \tau \mH(\nu) - \tau \mH(\mu).
$$
The unique MNE $(\nu^\ast, \mu^\ast)$ of $\mE_\tau$ is given by the unique solution to the fixed point equation (see e.g. \cite{domingo2020mean}) 
\begin{equation}\label{eq:fpt}\begin{aligned}
    \nu^\ast(dx)  \propto \exp\Big( - \tau^{-1} W \star \mu^\ast (y) \Big), \quad 
 \mu^\ast(dy)   \propto \exp\Big( \tau^{-1} W \star \nu^\ast (x))\Big),
\end{aligned}\end{equation}
where we have used the notations 
$$W \star \mu(x) = \int_{\mX_2}W(x,y) \mu(dy) \text{ and } W \star \nu(y) = \int_{\mX_1}W(x,y) \nu(dx). $$ 

A natural approach to approximate the MNE defined by \eqref{eq:fpt} is to consider the following Wasserstein gradient descent ascent flow of the energy $\mE_{\tau}$: 
\begin{equation}
\label{eq:WGDA}
\begin{aligned}
\partial_t \nu_t & = \nabla \cdot \Big(\nu_t \nabla_x W\star \mu_t \Big) +\tau \Delta \nu_t,\\
\partial_t \mu_t & = -\nabla \cdot \Big(\mu_t \nabla_y W\star \nu_t\Big) + \tau\Delta \mu_t.
\end{aligned}
\end{equation}
The dynamics \eqref{eq:WGDA} can be interpreted as the mean field limit ($N \rightarrow \infty$) of the algorithmically implementable interacting Langevin dynamics 
\begin{equation}\label{eq:gdaparticle}\begin{aligned}
    dX^i_t & = -\frac{1}{N} \sum_{j=1}^N \nabla_x W(X^i_t, Y^j_t) dt + \sqrt{2\tau} dW^i_t, \\
dY^i_t & = \frac{1}{N} \sum_{j=1}^N \nabla_y W(X^j_t, Y^i_t) dt + \sqrt{2\tau} dB^i_t.
\end{aligned}\end{equation}
For this reason, the dynamics  \eqref{eq:WGDA} is often referred to as as the {\em mean-field gradient descent ascent (mean-field GDA)}. It can also be viewed as the minimax analog of the mean-field Langevin dynamics studied in \cite{nitanda2022convex, chizat2022mean, hu2021mean}, which naturally arise in the mean-field analysis of the optimization of two-layer neural networks. Despite its simplicity and popularity, the convergence of the mean-field GDA \eqref{eq:WGDA} remains an open problem; for a detailed discussion on this problem, see \cite{wang2024open}. A two-scaled variant of \eqref{eq:WGDA} was previously investigated by \cite{lu2023two}, where the right-hand side of the second equation in \eqref{eq:WGDA} is multiplied by a small constant $\eta \ll 1$ (and  by \cite{ma2021provably} for the case of an infinitesimal $\eta$).  However, the slow progression of the slower dynamics in this two-scale approach significantly limits its efficiency and practical applicability.

\paragraph{Time-averaged mean-field gradient descent dynamics.} An interesting  time-averaged variant of \eqref{eq:WGDA} was recently introduced in  \cite{kim2023symmetric}, where the gradient velocity $\nabla_x \int_{\mY} W(x,y) d\mu_t(y)$ (and similarly the other gradient) in \eqref{eq:WGDA} is replaced by a weighted time-average $$\Big(\int_{0}^t \beta_s ds\Big)^{-1}\int_{0}^t \beta_s \nabla_x \int_{\mY} W(x,y) d\mu_s(y) ds $$  along its history with a discounting weight $\beta_t = t^{r}$ with $r>-1$. With this weight,  \cite{kim2023symmetric} managed to prove that for a fixed positive temperature $\tau>0$, the time-averaged dynamics with the two-players running  the same time-scale converges with an algebraic rate in the Nikaid\^o-Isoda (NI) error (\cite{nikaido1955note}). This result presents a promising alternative approach for designing uniformly scaled convergent dynamics for computing the MNE in two-player games.

In this paper, we study a multi-player analog of the time-averaged mean-field dynamics proposed by \cite{kim2023symmetric}. More precisely, we consider
\begin{equation}
\label{flow-multi-type}
\forall i\in\cco 1,K\ccf,\quad \left\{
\begin{array}{rcl}
\partial_t \nu_t^i   &= &  \na_{x_i} \cdot \po \nu_t^i \na_{x_i} U_i \star \nu_t^{-i} \pf + \tau \Delta_{x_i} \nu_t^i   \\  
\partial_t \hat\nu_t^i &= & \alpha  \po  \nu_t^i  - \hat \nu_t^i\pf,
\end{array}\right.
\end{equation}
where $\alpha>0$ and  $\nu_t^i$ is  the distribution of the mixed strategies used by the $i$-th  player at time $t$. 
Note that  the second  equation of \eqref{flow-multi-type} is explicitly solvable as
$$
\hat\nu_t^i =   e^{-\alpha t}\hat\nu_0^i +  \int_0^t \alpha  e^{-\alpha (t-s)}  \nu_s^i \dd s.
$$
Therefore the first  equation of \eqref{flow-multi-type} indicates that the distribution of $i$-th player interacts with the historical distributions of mixed strategies of all other players, but discounted by an exponential factor $e^{-\alpha t}$. The constant $\alpha>0$ controls the decaying rate of the discounting weight. We also note that given any initial condition $\nu_0,\hat\nu_0 \in \mP(\mX)$, the existence of uniqueness of the global solution to \eqref{flow-multi-type-annealed} follow directly from the classical  well-posedness of the theory of nonlinear Mckean-Vlasov-type PDEs (\cite{funaki1984certain,sznitman1991topics}, see also \cite[Proposition 1.1]{refId0} with time-averaging).
Observe that any stationary solution of \eqref{flow-multi-type} satisfies $\nu_t = \hat\nu_t=\nu^{\ast}_{\tau}$ with $\nu^{\ast}_{\tau}$ solving \eqref{eq:fixedpointmulti}. 

In the two-player setting ($K=2$), the dynamics  \eqref{flow-multi-type} reduces to 
\begin{equation}
\label{flow}
\left\{
\begin{array}{rcl}
\partial_t \nu_t   &= &  \na_x \cdot \po \nu_t \na_x W\star \hat\mu_t \pf +  \tau \Delta_x \nu_t   \\
\partial_t \mu_t   &= &   - \na_y \cdot \po \mu_t  \na_y W\star \hat\nu_t \pf + \tau \Delta_y \mu_t  \\   
\partial_t \hat\nu_t &= & \alpha  \po \nu_t - \hat\nu_t \pf\\
\partial_t \hat\mu_t &= & \alpha  \po  \mu_t - \hat\mu_t \pf. 
\end{array}\right. 
\end{equation}

We remark that our dynamics \eqref{flow} coincides with the time-averaged dynamics studied in \cite{kim2023symmetric} with a discounting weight  $\beta_t = \alpha e^{\alpha t}\mathbf{1}[0,t] + \delta_0$. It was  shown in \cite{kim2023symmetric} that the time-averaged dynamics with a polynomial discounting factor leads to an algebraic convergence rate. This motivates us to study whether a different choice of the weight, such as the exponential discounting factor, can lead to a faster rate of convergence. Specifically, we are interested in the following question:

{\em \textbf{Question 1:} Does the mean-field gradient descent dynamics \eqref{flow-multi-type} converges to the unique MNE of the energies $\{\mE_{i,\tau}\}_{i=1}^K$ with an exponential rate? }

As the first main result of the paper, we shall provide an affirmative answer to \textbf{Question 1}. Furthermore, inspired by the simulated annealing results for the mean-field Langevin dynamics (\cite{chizat2022mean,nitanda2022convex})  and a two-scale mean-field GDA dynamics (\cite{lu2023two}), we would like to investigate the convergence of the  time-averaged mean-field dynamics in the annealed regime where $\tau_t \rightarrow 0$ as $t \rightarrow \infty$. 

{\em \textbf{Question 2:} Does the annealed time-averaged mean-field gradient descent dynamics \eqref{flow-multi-type}  with a decreasing temperature $\tau = \tau_t$ converge to an MNE of the unregularized energies $\{\mE_i\}_{i=1}^K$ defined in \eqref{eq:mne}?}

\subsection{Summary of contributions}
We highlight the major contributions of the present paper as follows.

\begin{itemize}

\item We present a time-averaged mean field gradient descent dynamics \eqref{flow-multi-type} for approximating the MNE of a class of multi-player game problem, generalizing a similar dynamics proposed by \cite{kim2023symmetric} for two-player games. 

    \item We prove that for a fixed positive temperature $\tau$ and a sufficiently small $\alpha>0$, the time-averaged mean-field dynamics \eqref{flow-multi-type} converges exponentially to the MNE of the regularized game objectives  $\{\mE_{i,\tau}\}_{i=1}^K$.  We also quantify the dependence of the convergence rate explicitly in terms of $\alpha, \tau$ and bounds on the game losses $\{U_{i}\}_{i=1}^K$; see Theorem \ref{thm: bdnustat} for the precise statement.

    \item We show that by adopting a cooling schedule $\tau_t$ that decay with a logarithimic rate and a polynomial decreasing $\alpha_t$, the time-averaged mean-field  dynamics \eqref{flow-multi-type} converges to an MNE of the unregularized game objectives $\{\mE_i\}_{i=1}^K$ with respect to the Nikaid\` o-Isoda error (defined in \eqref{eq:NIerror-multi2}); see Theorem \ref{thm:mainanneal} for the precise statement.

\end{itemize}

\subsection{Related works}\label{sec:relatedwork}

\paragraph{Computational complexity of zero-sum  polymatrix games} Computing the mixed Nash equilibria (even approximately) of general games can be computationally intractable in the sense that approximate Nash equilibria is not achievable within polynomial time \cite{daskalakis2009complexity,chen2009settling}. When the space of strategies is discrete, the game problem defined in  \eqref{eq:Ui} belongs to a subclass of game problems called {\em polymatrix game} \cite{bregman1987methods,bergman1998separable}, which plays an essential role in showing the PPAD-hardness of games \cite{cai2011minmax,daskalakis2009complexity,chen2009settling,daskalakis2013complexity,etessami2010complexity}. In particular, in the discrete stategy space setting, our problem \eqref{eq:Ui} reduces to a pairwise zero-sum polymatrix game for which it has been shown \cite{daskalakis2009network,cai2011minmax} that a Nash equilibrium can be computed in polynomial-time using linear programming, despite that general polymatrix games are PPAD-complete. Nevertheless, the study of algorithmic guarantees of multi-player games in the continuous state space setting is still largely open.  

\paragraph{Continuous two-player games} For two-player games, the problem reduces to a minmax problem which received considerable attention recently. The most natural methods for finding the pure Nash equilibria are the gradient descent ascent methods for which many theoretical guarantees have been obtained under various assumptions on the game objectives. Many works either study the convergence of GDA in the convex-concave setting \cite{daskalakis2019last} in which pure equilibria exist  or its convergence to local equilibria in the non-convex settings \cite{daskalakis2018limit,lin2020gradient,yang2020global,yang2022faster,doan2022convergence}. Different from these works, the present paper studies the global convergence guarantees of algorithms for finding the mixed Nash equilibria of continuous multi-player games without convexity assumptions.

\paragraph{Mean field analysis}
The mean field perspective provides a valuable insight to the study of convergence analysis of dynamics for nonconvex optimization. Specifically, the mean-field Langevin dynamics plays a crucial role in the convergence convergence of gradient descent algorithms for neural networks, including  two-layer networks (\cite{mei2018mean,sirignano2020mean,rotskoff2022trainability,chizat2018global}),  residual networks (\cite{lu2020mean,wojtowytsch2020banach}) and general deep networks (\cite{araujo2019mean,sirignano2022mean,nguyen2019mean}). Global convergence of the mean-field Langevin dynamics has been studied in \cite{mei2018mean,chizat2018global,chizat2022mean,nitanda2022convex,hu2021mean}. Local convergence rates in non-convex cases have been established in \cite{MonmarcheReygner}.

In the context of min-max optimization, the mean-field GDA dynamics was first derived by  \cite{domingo2020mean,hsieh2019finding} from the interacting particle system \eqref{eq:gdaparticle}. The works \cite{lu2023two} and \cite{ma2021provably} proved the global convergence of the mean-field GDA dynamics to the mixed Nash equilibrium of entropy-regularized game objective in the two-scale setting where the ascent dynamics is running with a much faster or slower speed compared to the descent dynamics; see also \cite{an2025convergence}. Without the scale-separation, the convergence of the mean-field GDA dynamics remains an open question (\cite{wang2024open}), except  when the pay-off function $W(x,y)$ is  strongly-convex in $x$ and strongly-concave in $y$ \cite{cai2024convergence}. The work \cite{ding2024papal} studied a particle-based primal-dual algorithm for finding the mixed Nash equilibra.  For the time-averaged mean-field GDA dynamics, \cite{kim2023symmetric} obtained an algebraic convergence rate. This work improves such a convergence rate to an exponential rate by using a different discounting factor  in the time-average of the velocity. 

We remark that an alternative gradient flow with respect to the Wasserstein-Fisher-Rao (WFR) metric (\cite{chizat2018interpolating,kondratyev2016new,laschos2019geometric}) has also been used in the analysis of min-max optimization. \cite{domingo2020mean}  adopts a WFR-gradient flow for two-player zero-mean continuous games and proves that the time-averaging of the WFR-dynamics converges to the MNE in the regime where Fisher-Rao dynamics dominates the Wasserstein counterpart. In \cite{wang2022exponentially}, the authors introduced a  particle algorithm based on the implicit time-discretization of WFR-gradient flow for finding the MNE in the space of atomic measures. The authors proved a local exponential convergence of the proposed algorithm under certain non-degeneracy assumptions on the game objective and the  assumption that the MNE is unique. Lastly, the recent work (\cite{lascufisher,lascu2025entropic}) proved an exponential convergence for the pure Fisher-Rao gradient flow in the setting of the min-max games. 

Our main results parallel those obtained in \cite{lu2023two}, with the notable extension of results for zero-sum games involving $K$ players ($K\geq 2$). However, the technical approaches used in \cite{lu2023two} and in our work differ significantly. The exponential convergence demonstrated in \cite{lu2023two} heavily relies on the scale-separation assumption, enabling the construction of a specific Lyapunov function to measure the exponential convergence of the dynamics. In contrast, in this paper, the players are treated at the same time-scale, although the coupling between their evolution, governed by the parameter $\alpha$, is required to be small enough. This  makes straightforward the extension to more than $2$ player, with an algebraic dependency in $K$. At a technical level, we leverage careful entropy–entropy production estimates along the dynamics. In particular, the zero-sum assumption of the game introduces important cancellations of ``bad" terms in the entropy production, allowing it to be effectively controlled by the entropy itself. This control enables us to close the entropy–entropy production argument.

\subsection{General notations} 
We use $\mP(\mX)$ to denote the space of probability measures on the state space $\mX$.  Given two probability measures $\mu_1$ and $\mu_2$,  the relative entropy (or Kullback-Leibler divergence) $\mH(\mu_1|\mu_2)$ and the relative Fisher information $\mI(\mu_1|\mu_2)$ between two probability measures $\mu_1$ and $\mu_2$ are defined by 
$$\begin{aligned}
    \mH(\mu_1|\mu_2)  = \int \log\Big(\frac{d\mu_1}{d\mu_2}\Big) d\mu_1,\ 
    \mI(\mu_1|\mu_2)  = \int \Big|\nabla \log \Big(\frac{d\mu_1}{d\mu_2}\Big) \Big|^2 d\mu_1
\end{aligned}
$$
if $\mu_1$ is absolutely continuous with respect to $\mu_2$ and $+\infty$ otherwise. For $p\geq 1$, the $L^p$ Wasserstein distance $\mathcal W_p(\mu_1,\mu_2)$ between $\mu_1$ and $\mu_2$ is defined by 
$$
W_p(\mu_1,\mu_2) = \inf_{\gamma \in \Gamma(\mu_1,\mu_2)} \Big(\int |x-y|^p \gamma(dx dy)\Big)^{1/p},
$$
where $\Gamma(\mu_1,\mu_2)$ is the set of couplings between $\mu_1$ and $\mu_2$.  
We use $\|\mu_1-\mu_2\|_{TV}$ to denote the total variation distance between $\mu_1$ and $\mu_2$. For $K \in \N$,  we use $\cco 1,K\ccf$ to denote the set of integers $\{1,\cdots, K\}$. 

\section{Assumptions and Main Results}

\subsection{Settings and notations} \label{sec:setting}

Recall that we consider games between $K$   players with $K\geq 2$. Assume that the loss of a player  $i\in\cco 1,K\ccf$  playing strategy $x_i \in\mathcal{X}_i $ against all of other players $x_{-i} = (x_j)_{j\neq i}$,  is given by $U_i(x) = U_i(x_i, x_{-i}) $. We make the following assumption on the sum of pairwise zero-sum games:
\begin{assu}\label{ass-multi}
For all distinct pair $i,j\in\cco 1,K\ccf$,  there exist $W_{ij}:\mX_i\times \mX_j \mapsto \R$ with $W_{ij}=-W_{ji}$ such that
\begin{equation}
    \label{pairwise}
    U_i(x) = \sum_{j\neq i} W_{ij}(x_i,x_j)\,.
\end{equation}
Moreover, assume that $W_{ij}\in C^2(\mX_i\times \mX_j)$ for all distinct $i,j\in\cco 1,K\ccf$. Denote constants $M_0,M_1, M_2, L$  by  
\[M_0=\max_{i\in\cco1,K\ccf} \| U_{i}\|_\infty\,, L=\max_{i\in\cco1,K\ccf} \| \nabla U_{i}\|_\infty,  M_1 = \sum_{i=1}^K   \| U_i\|_\infty, \, M_2 =  \max_{i\in\cco 1,K\ccf}\sum_{j\neq i}   \|\na_{xy}^2 W_{i,j}\|_\infty \,. \]  
%$$
%\|U\|_\infty := \max_{i\in\cco 1,K\ccf}\|U_i\|_\infty\,,\qquad M = \max_{i\in\cco 1,K\ccf} \sum_{j\neq k}   \|\na^2 _{x_i,x_j} W_{ij}\|_\infty
% $$
\end{assu}
Recall the game objectives $\{\mE_i\}_{i=1}^K$ and $\{\mE_{i,\tau}\}_{i=1}^K$ defined in and \eqref{eq:mne} and \eqref{eq:Etau} respectively. Recall also that the MNE $\nu^*_{\tau}=(\nu^{1,\ast}_{\tau},\dots,\nu^{K,\ast}_{\tau})$ associated to $\{\mE_{i,\tau}\}_{i=1}^K$ solves the fixed point equation~\eqref{eq:fixedpointmulti}. 
To measure the discrepancy of a collection of measures $\nu=(\nu^1,\dots,\nu^K)$ to the MNE of the entropy-regularized energies $\{\mE_{i,\tau}\}_{i=1}^K$, we define  the Nikaid\^o-Isoda (NI) error (\cite{nikaido1955note}) of $\nu$ associated to $\{\mE_{i,\tau}\}_{i=1}^K$ by 
\begin{equation}\label{eq:NIerror-multi}
 \NI_{\tau}(\nu)  := \sum_{i=1}^K \co  \mathcal E_{i,\tau}(\nu^i,\nu^{-i}) - \inf_{\mu^i\in\mathcal P(\mX_i)}   \mathcal E_{i,\tau}(\mu^i,\nu^{-i})\cf = \tau \sum_{i=1}^K \mathcal H\po \nu_i |\rho_{U_i\star\nu^{-i}}\pf .
\end{equation}
Similarly, we also define the NI error of $\nu$ associated to the unregularized energies $\{\mE_{i}\}_{i=1}^K$ as  
\begin{equation}\label{eq:NIerror-multi2}
 \NI(\nu)  := \sum_{i=1}^K \co  \mathcal E_{i}(\nu^i,\nu^{-i}) - \inf_{\mu^i\in\mathcal P(\mX_i)}   \mathcal E_{i}(\mu^i,\nu^{-i})\cf.
\end{equation}
To close the setting, we present a  proposition which establishes that there exists a unique equilibrium measure $\nu^\ast_\tau$ satisfying \eqref{eq:fixedpointmulti}. 

\begin{prop}\label{prop:NEmulti} 
Under Assumption~\ref{ass-multi}, for any $\tau>0$, there exists a unique Nash equilibrium $\nu^\ast_\tau = (\nu^{1,\ast}_{\tau},\dots,\nu^{K,\ast}_{\tau})$ associated to the energies $\{\mE_{i,\tau}\}_{i=1}^K$. Moreover, for any probability measures  $\nu = (\nu^1,\dots,\nu^K)$, it holds that % writing $\rho_i = $ $A^i = \sum_{j\neq i } W_{ij}\star \nu^j$ for $i\in\cco 1,K\ccf$,
    \begin{equation}
        \label{eq:entropy_ineq}
        \NI_\tau (\nu) \geqslant \tau \sum_{i=1}^K \mathcal H\po \nu^i|\nu^{i,\ast}_{\tau}\pf 
    \end{equation}
\end{prop}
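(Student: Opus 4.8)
The plan is to split the statement into two parts: existence/uniqueness of the fixed point $\nu^\ast_\tau$, and the entropy inequality \eqref{eq:entropy_ineq}. For the first part, I would set up a fixed-point map $\Phi$ on the product space $\prod_{i=1}^K\mathcal P(\mX_i)$ by $\Phi(\nu)^i = \rho_{U_i\star\nu^{-i}} \propto \exp(-\tau^{-1}U_i\star\nu^{-i})$; a solution of \eqref{eq:fixedpointmulti} is exactly a fixed point of $\Phi$. Existence follows from a compactness/continuity argument (Schauder, using that each $\mX_i$ is a compact torus and $W_{ij}\in C^2$, so $\nu\mapsto U_i\star\nu^{-i}$ is continuous in, say, the weak topology, and the Gibbs map is continuous); alternatively one can quote that this is standard for entropic zero-sum games. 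The substantive point is \emph{uniqueness}, and here I would exploit the pairwise zero-sum structure: suppose $\nu^\ast=(\nu^{i,\ast})$ and $\tilde\nu^\ast=(\tilde\nu^{i,\ast})$ are two fixed points. Using \eqref{eq:fixedpointmulti} one writes, for each $i$,
\[
\tau\bigl(\mathcal H(\nu^{i,\ast}\,|\,\tilde\nu^{i,\ast}) + \mathcal H(\tilde\nu^{i,\ast}\,|\,\nu^{i,\ast})\bigr)
= \int \bigl(U_i\star\tilde\nu^{-i,\ast} - U_i\star\nu^{-i,\ast}\bigr)\,d(\nu^{i,\ast}-\tilde\nu^{i,\ast}),
\]
since the log-ratio of two Gibbs measures with potentials $-\tau^{-1}U_i\star\nu^{-i,\ast}$ and $-\tau^{-1}U_i\star\tilde\nu^{-i,\ast}$ is $\tau^{-1}(U_i\star\tilde\nu^{-i,\ast}-U_i\star\nu^{-i,\ast})$ up to an additive constant that integrates to zero against the signed measure $\nu^{i,\ast}-\tilde\nu^{i,\ast}$. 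Summing over $i$ and expanding $U_i=\sum_{j\neq i}W_{ij}$, the right-hand side becomes $\sum_{i}\sum_{j\neq i}\int W_{ij}(x_i,x_j)\,d(\nu^{i,\ast}-\tilde\nu^{i,\ast})(x_i)\,d(\tilde\nu^{j,\ast}\text{ or }\nu^{j,\ast})(x_j)$-type terms; the antisymmetry $W_{ij}=-W_{ji}$ forces these to cancel in pairs $(i,j)\leftrightarrow(j,i)$, leaving the total sum equal to $0$. Since the left-hand side is a sum of nonnegative terms, each $\mathcal H(\nu^{i,\ast}|\tilde\nu^{i,\ast})=0$, hence $\nu^{i,\ast}=\tilde\nu^{i,\ast}$ for all $i$, giving uniqueness.

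For the inequality \eqref{eq:entropy_ineq}, I would start from the identity already recorded in \eqref{eq:NIerror-multi}, namely $\NI_\tau(\nu)=\tau\sum_{i=1}^K\mathcal H(\nu^i\,|\,\rho_{U_i\star\nu^{-i}})$, and compare $\rho_{U_i\star\nu^{-i}}$ with $\nu^{i,\ast}_\tau=\rho_{U_i\star\nu^{-i,\ast}_\tau}$. The natural tool is the "entropy sandwich" / three-point identity for Gibbs measures: for any $\mu$ and any two potentials,
\[
\mathcal H(\mu\,|\,\rho_{U_i\star\nu^{-i}}) = \mathcal H(\mu\,|\,\nu^{i,\ast}_\tau) + \tau^{-1}\!\int\!\bigl(U_i\star\nu^{-i}-U_i\star\nu^{-i,\ast}_\tau\bigr)\,d\mu - \tau^{-1}\!\int\!\bigl(U_i\star\nu^{-i}-U_i\star\nu^{-i,\ast}_\tau\bigr)\,d\nu^{i,\ast}_\tau,
\]
applied with $\mu=\nu^i$. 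Multiplying by $\tau$, summing over $i$, and again expanding $U_i=\sum_{j\neq i}W_{ij}$ and using $W_{ij}=-W_{ji}$, the cross terms telescope/cancel exactly as in the uniqueness argument — the key algebraic fact is that $\sum_i\int (U_i\star\mu^{-i})\,d\lambda^i$ is antisymmetric under swapping any pair of measures between slots $i$ and $j$ — leaving $\NI_\tau(\nu)=\tau\sum_i\mathcal H(\nu^i|\nu^{i,\ast}_\tau)+(\text{a sum that vanishes})$, or more precisely $\NI_\tau(\nu)\geq\tau\sum_i\mathcal H(\nu^i|\nu^{i,\ast}_\tau)$ after discarding a manifestly nonnegative remainder if the cancellation only yields an inequality. (This is the multi-player analogue of the two-player sandwich $\mathcal H(\mu|\mu_*)+\mathcal H(\nu|\nu_*)\le\mathcal H(\mu|\rho_{-W\star\nu})+\mathcal H(\nu|\rho_{W\star\mu})$ quoted from \cite{kim2023symmetric,lascu2023entropic} in the commented-out section.)

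The main obstacle I anticipate is bookkeeping the cancellations correctly: one must track that, after summing over $i$ and expanding the pairwise losses, every bilinear term $\int W_{ij}\,d(\cdots)_i\,d(\cdots)_j$ is matched by a term $\int W_{ji}\,d(\cdots)_j\,d(\cdots)_i=-\int W_{ij}\,d(\cdots)_j\,d(\cdots)_i$ with \emph{exactly} the same pair of measures in the corresponding slots, so the sum is genuinely zero and not merely bounded. The delicate point is that in the three-point identity the "minus" term is integrated against $\nu^{i,\ast}_\tau$ rather than against $\nu^i$, so one has to check that the mixed configurations (some slots carrying $\nu$, some carrying $\nu^\ast_\tau$) still pair up antisymmetrically; this works because the antisymmetry $W_{ij}=-W_{ji}$ is a pointwise property independent of which measures are integrated, but it requires writing the double sum $\sum_i\sum_{j\neq i}$ carefully and relabeling $(i,j)\leftrightarrow(j,i)$. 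Once this is set up, both parts of the proposition follow from the same one-line cancellation, which is precisely the structural role of the zero-sum assumption emphasized in the introduction.
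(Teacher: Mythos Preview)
Your proposal is correct and ultimately rests on the same antisymmetric cancellation as the paper, but the overall architecture differs in two noteworthy ways.

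For \emph{existence}, the paper does not invoke Schauder. Instead it uses the flow \eqref{flow-multi-type} itself: by Proposition~\ref{prop:Lyapunov-multi} (whose proof is independent of Proposition~\ref{prop:NEmulti}), $s_t\to 0$ exponentially, which forces $(\hat\nu_t)_{t\geq 0}$ to be Cauchy in total variation; its limit $\nu^*_\tau$ is then shown to solve the fixed-point equation \eqref{eq:fixedpointmulti} by continuity of the Gibbs map. Your Schauder argument is more self-contained and avoids the forward reference, while the paper's route is a pleasant bootstrap that obtains existence ``for free'' from the main dynamical estimate.

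For the remaining two claims, the paper reverses your order: it proves \eqref{eq:entropy_ineq} first and reads off uniqueness as an immediate corollary (if $\nu$ is an MNE then $\NI_\tau(\nu)=0$, hence each $\mathcal H(\nu^i|\nu^{i,*}_\tau)=0$). Its derivation of \eqref{eq:entropy_ineq} starts from the lower bound $\NI_\tau(\nu)\geq\sum_i\bigl[\mE_{i,\tau}(\nu^i,\nu^{-i})-\mE_{i,\tau}(\nu^{i,*}_\tau,\nu^{-i})\bigr]$, expands, applies the zero-sum identity $\sum_i\int U_i\,\mu^i\mu^{-i}=0$ twice (once for $\nu$, once for $\nu^*_\tau$), relabels $(i,j)\leftrightarrow(j,i)$, and uses $U_j\star\nu^{-j,*}_\tau=-\tau\ln\nu^{j,*}_\tau+\text{const}$ to land exactly on $\tau\sum_i\mathcal H(\nu^i|\nu^{i,*}_\tau)$. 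Your three-point route is the same algebra in different packaging; the term the paper drops in its first inequality is precisely $\tau\sum_i\mathcal H(\nu^{i,*}_\tau\,|\,\rho_{U_i\star\nu^{-i}})$, which is also the term missing from your displayed ``identity'' (so that equation is an inequality, not an equality, exactly as you anticipated in your hedge about a ``manifestly nonnegative remainder''). Either way the cross terms cancel exactly, so your bookkeeping worry is real but resolvable: writing $b_i=\nu^i-\nu^{i,*}_\tau$, the sum $\sum_i\sum_{j\neq i}\int W_{ij}\,b_i\,b_j$ vanishes identically by $W_{ij}=-W_{ji}$.
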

We refer the  proof of Proposition \ref{prop:NEmulti}  to Section \ref{sec:proof1}.

\subsection{Convergence time-averaged mean-field gradient descent dynamics at a fixed temperature}  \label{sec:fixedtemp} 

 Recall the time-averaged mean-field gradient descent dynamics  \eqref{flow-multi-type} whose solution is given by $\nu_t=\bigotimes_{i=1}^K \nu_t^i$ and $\hat \nu_t = \bigotimes_{i=1}^K  \hat \nu_t^i$. 
 \begin{comment}
We are interested in  approximating  the MNE of  $\{\mE_{i,\tau}\}_{i=1}^K$  by using following time-averaged mean-field dynamics, which extends \eqref{flow} to the general case $K\geqslant 2$:
\begin{equation}
\label{flow-multi-type}
\forall i\in\cco 1,K\ccf,\quad \left\{
\begin{array}{rcl}
\partial_t \nu_t^i   &= &  \na_{x_i} \cdot \po \nu_t^i \na_{x_i} U_i \star \nu_t^{-i} \pf + \tau \Delta_{x_i} \nu_t^i   \\  
\partial_t \hat\nu_t^i &= & \alpha  \po  \nu_t^i  - \hat \nu_t^i\pf,
\end{array}\right.
\end{equation}
where $\nu_t^i$ is  the distribution of the mixed strategies used by the $i$-th  player at time $t$. Any stationary solution of \eqref{flow-multi-type} satisfies $\nu_t = \hat\nu_t=\nu^{\ast}_{\tau}$ with $\nu^{\ast}_{\tau}$ satisfying~\eqref{eq:fixedpointmulti}. For $\nu_t=\bigotimes_{i=1}^K \nu_t^i$ and $\hat \nu_t = \bigotimes_{i=1}^K  \hat \nu_t^i$ solving~\eqref{flow-multi-type},
\end{comment}
Given these measures, we also define the proximal-Gibbs measure $\rho_t=\bigotimes_{i=1}^K \rho_t^i$ with
\[\rho_t^i \propto \exp \po -\tau^{-1} U_i \star \hat\nu_t^{-i}\pf,\, i\in\cco 1,K\ccf. \]
With the definitions above, it is straightforward to verify that 
\begin{equation}
    \label{eq:NIhat}
\NI_{\tau}(\hat\nu_t) = \tau\mathcal H(\hat\nu_t|\rho_t).
\end{equation}
To track the convergence of \eqref{flow-multi-type} to the equilibrium, we define the following Lyapunov function 
\[s_t := \mathcal H(\nu_t|\rho_t) + \mathcal H(\hat\nu_t|\rho_t) = \sum_{i=1}^K \mathcal H(\nu^i_t|\rho^i_t) + \mathcal H(\hat\nu^i_t|\rho^i_t) \,. \]
The proposition below establishes the exponential contraction of $s_t$ along \eqref{flow-multi-type}. 
 
\begin{prop}\label{prop:Lyapunov-multi}
Under Assumption~\ref{ass-multi}, set
\begin{equation}
\kappa =\frac{\exp(M_0/\tau)}{8\pi^2 }\,,\qquad \lambda = \min\po \frac{\tau}{2\kappa},\frac{\alpha}{4}\pf \,,\qquad \bar \alpha_0= \frac{\tau}{53 \kappa \max(1,(\kappa M_2/\tau)^2) }
 \,. \label{eq:kappa}
 \end{equation}
 Assume that $\nu_0^i,\hat\nu_0^i,i\in  \cco 1,K\ccf$ have finite entropies and that $\alpha \in(0,\bar \alpha_0]$. Then for all $t\geqslant 0$, along the flow~\eqref{flow-multi-type},
\[ s_t \leqslant e^{-\lambda   t}s_0  \,.\]

\end{prop}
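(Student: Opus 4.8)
The plan is to establish a Grönwall-type differential inequality $\frac{d}{dt} s_t \leqslant -\lambda s_t$ by carefully differentiating the Lyapunov function $s_t = \sum_i \mathcal H(\nu^i_t|\rho^i_t) + \mathcal H(\hat\nu^i_t|\rho^i_t)$ along the coupled flow~\eqref{flow-multi-type}. I would split the computation into two families of terms: the ``fast'' relative entropies $\mathcal H(\nu^i_t|\rho^i_t)$, which are dissipated by the Fokker–Planck part of the first equation, and the ``slow'' ones $\mathcal H(\hat\nu^i_t|\rho^i_t)$, which are controlled through the averaging ODE $\partial_t\hat\nu^i_t = \alpha(\nu^i_t - \hat\nu^i_t)$. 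Note crucially that the reference measures $\rho^i_t \propto \exp(-\tau^{-1}U_i\star\hat\nu^{-i}_t)$ are themselves time-dependent, driven by the slow variables $\hat\nu^{-i}_t$, so each time-derivative produces an extra term of the form $-\int (\partial_t \log \rho^i_t)\, d(\cdot)$ that must be tracked.

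For the fast terms, differentiating $\mathcal H(\nu^i_t|\rho^i_t)$ and using the standard de Bruijn-type identity for the mean-field Langevin flow (with frozen potential $U_i\star\hat\nu^{-i}_t$) gives $-\tau\,\mathcal I(\nu^i_t|\rho^i_t)$ from the diffusion, plus the contribution of $\partial_t\rho^i_t$. By the log-Sobolev inequality for $\rho^i_t$ with constant $\kappa = e^{M_0/\tau}/(8\pi^2)$ (Holley–Stroock perturbation of the torus LSI), we bound $-\tau\mathcal I(\nu^i_t|\rho^i_t) \leqslant -\frac{\tau}{\kappa}\mathcal H(\nu^i_t|\rho^i_t)$, which yields the $\tau/(2\kappa)$ contribution to $\lambda$ with room to spare. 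For the slow terms, $\frac{d}{dt}\mathcal H(\hat\nu^i_t|\rho^i_t)$ splits into a linear-response piece from $\partial_t\hat\nu^i_t = \alpha(\nu^i_t-\hat\nu^i_t)$, which gives $-\alpha\,\mathcal H(\hat\nu^i_t|\rho^i_t)$ plus $\alpha\int\log(\hat\nu^i_t/\rho^i_t)\,d\nu^i_t$, and the $\partial_t\rho^i_t$ piece. The cross term $\alpha\int\log(\hat\nu^i_t/\rho^i_t)\,d\nu^i_t$ I would control by splitting $\log(\hat\nu^i_t/\rho^i_t) = \log(\hat\nu^i_t/\nu^i_t) + \log(\nu^i_t/\rho^i_t)$ and using $\int\log(\hat\nu^i_t/\nu^i_t)\,d\nu^i_t = -\mathcal H(\nu^i_t|\hat\nu^i_t) \leqslant 0$ together with Young/Cauchy–Schwarz to absorb the remainder into the fast dissipation $\mathcal I(\nu^i_t|\rho^i_t)$ and a small multiple of $\mathcal H(\hat\nu^i_t|\rho^i_t)$.

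The delicate part — and what I expect to be the main obstacle — is controlling the total contribution of the $\partial_t\rho^i_t$ terms, i.e.\ the terms $\int \partial_t(\tau^{-1}U_i\star\hat\nu^{-i}_t)\,d(\nu^i_t - \rho^i_t)$ and similarly with $\hat\nu^i_t$. Since $\partial_t\hat\nu^j_t = \alpha(\nu^j_t - \hat\nu^j_t)$, these terms carry an explicit factor $\alpha$ and involve $\tau^{-1}\int \nabla_{x_j}W_{ij}\cdot(\text{stuff})$, so they are bounded by $\alpha\tau^{-1}M_2$ times products of total-variation-type or $W_2$-type distances of $\nu^j_t,\hat\nu^j_t$ to $\rho^j_t$; via Pinsker and Talagrand (the latter following from the LSI, with constant $4\kappa$) these reduce to $\alpha\tau^{-1}M_2$ times $\sqrt{s_t}$-type quantities, hence to $C\alpha\kappa M_2\tau^{-1}$ times $(\mathcal I + \mathcal H)$-type expressions after another Young's inequality. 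Summing over all $i$ and all $j\neq i$ and collecting constants, the requirement that these ``bad'' terms be dominated by the available dissipation $\frac{\tau}{2\kappa}\sum_i\mathcal H(\nu^i_t|\rho^i_t)$ and by $\frac{\alpha}{4}\sum_i\mathcal H(\hat\nu^i_t|\rho^i_t)$ is precisely what forces $\alpha \leqslant \bar\alpha_0 = \tau/(53\kappa\max(1,(\kappa M_2/\tau)^2))$; here the zero-sum structure $W_{ij} = -W_{ji}$ should produce cancellations among the cross terms that prevent a worse (e.g.\ $K$-dependent in a bad way) constant. Once all terms are assembled, we obtain $\frac{d}{dt}s_t \leqslant -\lambda s_t$ with $\lambda = \min(\tau/(2\kappa),\alpha/4)$, and Grönwall's lemma gives $s_t \leqslant e^{-\lambda t}s_0$; the finiteness of $s_0$ follows from the finite-entropy assumption on the initial data together with the boundedness of $U_i$.
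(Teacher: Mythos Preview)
Your overall architecture is right and matches the paper: differentiate the two families of entropies, use the LSI/Talagrand pair for $\rho_t^i$ with constant $\kappa$, bound the cross term $\alpha\int\log(\hat\nu^i_t/\rho^i_t)\,d\nu^i_t$ via $\mathcal H(\nu^i_t|\rho^i_t)-\mathcal H(\nu^i_t|\hat\nu^i_t)\leqslant \mathcal H(\nu^i_t|\rho^i_t)$, and close with Gr\"onwall. The gap is in your treatment of the $\partial_t\rho^i_t$ contribution to the \emph{slow} entropy $\partial_t\mathcal H(\hat\nu^i_t|\rho^i_t)$, and in particular in what you think the zero-sum structure is doing.

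That term is $\frac{\alpha}{\tau}\sum_{j\neq i}\int W_{ij}\,(\hat\nu^i_t-\rho^i_t)\otimes(\nu^j_t-\hat\nu^j_t)$. If you bound it directly by Kantorovich duality and Talagrand, you get a contribution of order $\frac{\alpha\kappa M_2}{\tau}\,\mathcal H(\hat\nu_t|\rho_t)$ coming from the factor $\mathcal W_1(\hat\nu^i_t,\rho^i_t)$. The only dissipation available for $\mathcal H(\hat\nu_t|\rho_t)$ is $-\alpha\,\mathcal H(\hat\nu_t|\rho_t)$, so absorption would require $\kappa M_2/\tau$ to be bounded by an absolute constant. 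But $\kappa M_2/\tau = e^{M_0/\tau}M_2/(8\pi^2\tau)$ is large, and no smallness of $\alpha$ helps: the $\alpha$'s cancel. Asymmetric Young's inequalities only shuffle the problem between the $i$- and $j$-factors without removing it. So the differential inequality does not close along the route you describe, and the condition $\alpha\leqslant\bar\alpha_0$ is \emph{not} what rescues this term.

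What the antisymmetry $W_{ij}=-W_{ji}$ actually does is more specific than ``improve the $K$-dependence'': when you sum over $i$ and pair $(i,j)$ with $(j,i)$, the identity
\[
\int W_{ij}(\hat\nu^i_t-\rho^i_t)(\nu^j_t-\hat\nu^j_t)+\int W_{ji}(\hat\nu^j_t-\rho^j_t)(\nu^i_t-\hat\nu^i_t)
=\int W_{ij}(\nu^i_t-\rho^i_t)(\nu^j_t-\hat\nu^j_t)+\int W_{ji}(\nu^j_t-\rho^j_t)(\nu^i_t-\hat\nu^i_t)
\]
holds exactly (the difference is $\int W_{ij}(\hat\nu^i_t-\nu^i_t)(\nu^j_t-\hat\nu^j_t)-\int W_{ij}(\hat\nu^j_t-\nu^j_t)(\nu^i_t-\hat\nu^i_t)=0$). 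This replaces the bad factor $(\hat\nu^i_t-\rho^i_t)$ by the good factor $(\nu^i_t-\rho^i_t)$, whose $\mathcal W_1$-distance is controlled by the \emph{fast} dissipation $-\tau\mathcal I(\nu_t|\rho_t)$. After this swap, a Young's inequality with parameter $\theta'=4\kappa M_2/\tau$ leaves only $-\frac{\alpha}{2}\mathcal H(\hat\nu_t|\rho_t)$ on the slow side, while the entire burden is transferred to $\mathcal H(\nu_t|\rho_t)$ with a coefficient of order $\alpha\kappa^2 M_2^2/\tau^2$; \emph{that} is what the condition $\alpha\leqslant\bar\alpha_0$ absorbs into $-\tau/\kappa\,\mathcal H(\nu_t|\rho_t)$. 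Once you insert this cancellation step, the rest of your outline goes through and matches the paper's proof.
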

Proposition \ref{prop:Lyapunov-multi} shows that both the measure $\nu_t$ and the time-averaged measure $\hat \nu_t$ are getting closer to the proximal Gibbs measures $\rho_t$ with respect to the relative entropy as $t$ tends to infinity. 
The proof of Proposition \ref{prop:Lyapunov-multi} can be found in Section \ref{sec:prooffor23}. This  proposition is a key ingredient to obtain the convergence of \eqref{flow-multi-type} to its equilibrium as presented in the theorem below.

\begin{thm}\label{thm: bdnustat}
     Under the assumptions and with the notations of Proposition \ref{prop:Lyapunov-multi}, we have  for all $t\geq 0$ that 
    \[ \sum_{i=1}^K   \mathcal H \po \hat \nu_t^i|\nu^{i,\ast}_\tau \pf  \leqslant  e^{-\lambda t}  s_0 \]
    and that 
    \[\sum_{i=1}^K \|\nu_t^i - \nu^{i,\ast}_\tau \|_{TV}^2 \leqslant 12  e^{-\lambda t}  s_0\,. \]
\end{thm}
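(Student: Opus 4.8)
The plan is to derive both estimates from Proposition \ref{prop:Lyapunov-multi}, which gives $s_t = \mathcal H(\nu_t|\rho_t) + \mathcal H(\hat\nu_t|\rho_t) \leqslant e^{-\lambda t}s_0$, combined with the ``entropy sandwich'' inequality from Proposition \ref{prop:NEmulti}. First I would handle the bound on $\sum_i \mathcal H(\hat\nu_t^i|\nu^{i,\ast}_\tau)$. The key observation is that $\hat\nu_t = \bigotimes_i \hat\nu_t^i$ and that $\rho_t^i \propto \exp(-\tau^{-1}U_i\star\hat\nu_t^{-i})$ is exactly the proximal-Gibbs measure $\rho_{U_i\star\hat\nu_t^{-i}}$ built from $\hat\nu_t$ itself. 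Hence by \eqref{eq:NIerror-multi} applied to the collection $\hat\nu_t$, we have $\NI_\tau(\hat\nu_t) = \tau\sum_i \mathcal H(\hat\nu_t^i|\rho_t^i) = \tau\mathcal H(\hat\nu_t|\rho_t)$, which is \eqref{eq:NIhat}. Then the inequality \eqref{eq:entropy_ineq} of Proposition \ref{prop:NEmulti}, applied with $\nu = \hat\nu_t$, gives
\[
\tau\sum_{i=1}^K \mathcal H\po\hat\nu_t^i|\nu^{i,\ast}_\tau\pf \leqslant \NI_\tau(\hat\nu_t) = \tau\mathcal H(\hat\nu_t|\rho_t) \leqslant \tau\, s_t \leqslant \tau e^{-\lambda t}s_0\,,
\]
and dividing by $\tau$ yields the first claim.

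For the total variation bound, I would use the triangle inequality in TV to split $\|\nu_t^i - \nu^{i,\ast}_\tau\|_{TV}$ into three pieces: $\|\nu_t^i - \rho_t^i\|_{TV}$, $\|\rho_t^i - \hat\nu_t^i\|_{TV}$, and $\|\hat\nu_t^i - \nu^{i,\ast}_\tau\|_{TV}$. Squaring and using $(a+b+c)^2 \leqslant 3(a^2+b^2+c^2)$, then Pinsker's inequality $\|\cdot\|_{TV}^2 \leqslant 2\mathcal H(\cdot|\cdot)$ on each term, gives
\[
\sum_{i=1}^K \|\nu_t^i - \nu^{i,\ast}_\tau\|_{TV}^2 \leqslant 6\sum_{i=1}^K \Big( \mathcal H(\nu_t^i|\rho_t^i) + \mathcal H(\hat\nu_t^i|\rho_t^i) + \mathcal H(\hat\nu_t^i|\nu^{i,\ast}_\tau)\Big)\,.
\]
The first two sums add up to $6\, s_t$ by the definition of $s_t$, and the third sum is bounded by $6\,e^{-\lambda t}s_0$ using the first claim of the theorem. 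Combining, $\sum_i \|\nu_t^i - \nu^{i,\ast}_\tau\|_{TV}^2 \leqslant 6 s_t + 6 e^{-\lambda t}s_0 \leqslant 12 e^{-\lambda t}s_0$, as desired.

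I do not anticipate a serious obstacle here: essentially all the work has been front-loaded into Proposition \ref{prop:Lyapunov-multi} (the entropy--entropy-production estimate) and Proposition \ref{prop:NEmulti} (existence/uniqueness of the MNE and the entropy sandwich). The one point that requires a little care is making sure the proximal-Gibbs measure $\rho_t$ appearing in $s_t$ is literally the measure used to define $\NI_\tau(\hat\nu_t)$ — i.e. that it is built from the time-averaged marginals $\hat\nu_t^{-i}$ and not from $\nu_t^{-i}$; this is exactly the content of \eqref{eq:NIhat}, so it is already established. A second minor point is the correct constant in Pinsker's inequality and in the elementary inequality $(a+b+c)^2 \leqslant 3(a^2+b^2+c^2)$, which together produce the factor $6$ and hence the final $12$; these are routine.
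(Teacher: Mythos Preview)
Your proposal is correct and follows essentially the same approach as the paper: first apply Proposition~\ref{prop:NEmulti} (via~\eqref{eq:NIhat}) to bound $\sum_i \mathcal H(\hat\nu_t^i|\nu^{i,\ast}_\tau)$ by $\mathcal H(\hat\nu_t|\rho_t)\leqslant s_t$, then use the triangle inequality in TV together with $(a+b+c)^2\leqslant 3(a^2+b^2+c^2)$ and Pinsker to reach $12 s_t$, concluding with Proposition~\ref{prop:Lyapunov-multi}. Your remark that $\rho_t$ is built from $\hat\nu_t^{-i}$ (so that~\eqref{eq:NIhat} applies) is exactly the point the paper relies on.
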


\begin{remark}
Theorem \ref{thm: bdnustat} states that for a small weight parameter $\alpha$, the time-averaged mean-field GDA dynamics \eqref{flow-multi-type} converges to its invariant measure exponentially fast. This improves a prior algebraic convergence rate obtained by \cite{kim2023symmetric} for a similar time-averaged mean-field dynamics but with a polynomially decaying weight. On the other hand, we note that the  convergence rate $\lambda = O(e^{-M_0 \tau^{-1}})$ when the temperature $\tau$ is small, which is consistent with the existing rates of convergence obtained for mean-field Langevin dynamics (\cite{chizat2022mean,nitanda2022convex}) and mean-field GDA dynamics (\cite{lu2023two}). We also remark that our convergence result assumes a small  $\alpha$, which only enters  the exponential discounting factor used for  time-averaging of the velocity, but all players run their dynamics at a uniform speed. This is in contrast to \cite{lu2023two}, where the convergence result there relies on time-scale separation between different players.
\end{remark}

\subsection{Convergence of annealed dynamics}\label{sec:anneal}
In this section, we study the convergence of the following annealed time-averaged mean-field GDA dynamics
\begin{equation}
\label{flow-multi-type-annealed}
\forall i\in\cco 1,K\ccf,\quad \left\{
\begin{array}{rcl}
\partial_t \nu_t^i   &= &  \na_x \cdot \po \nu_t^i \na_x U_i\star \hat\nu_t^i \pf + \tau_t \Delta_x \nu_t^i   \\  
\partial_t \hat \nu_t^i &= & \alpha_t  \po \nu_t^i - \hat\nu_t^i \pf\,.
\end{array}\right.
\end{equation}
In the above, $\tau_t$ is a cooling function of time that decreases to zero as $t$ tends to infinity, and $\alpha_t>0$ is a time-dependent weight exponent.  

We aim to show that by choosing the cooling schedule $\tau_t$ and the weight exponent $\alpha_t$ appropriately, the solution $\{\nu_t^i\}$ converges to an MNE of the unregularized energies $\{\mE_i\}_{i=1}^K$.  To this end, we first track the evolution of certain entropies along the annealed dynamics in the next proposition, which extends  Proposition \ref{prop:Lyapunov-multi} by taking account of time-varying temperature $\tau_t$ and weight $\alpha_t$. To state the result, it is useful to set $\rho_{t,\tau}^i\propto \exp(-U_i\star\hat\nu_t^{-i}/\tau)$ for $i\in\cco 1,K\ccf$. We also write 
    \[s_{t,\tau}:= \sum_{i=1}^K \co   \mathcal H\po \nu_t^i|\rho_{t,\tau}^i \pf + \mathcal H\po \hat \nu_t^i|\rho_{t,\tau}^i \pf\cf \,.  \]

\begin{prop} For constants $\delta$ and $\beta$ such that 
\begin{equation} \label{eq:cond_annealing-multi}
    0<\delta \leqslant \frac1{12M_0}\,, 0< \beta \leqslant  \frac3{  \delta^3 M_2}\,,
\end{equation} we choose $c_0 >1$ large enough so that
    \begin{equation}
        \label{eq:cond_annealing-multi-c0}
c_0 \geqslant  \frac{ 144}{\beta^2}\,,\qquad \delta  M_2 c_0^{\delta M_0} \ln c_0  \geqslant 8 \pi^2 \,. 
        \end{equation}
    We set     
    \label{prop:annealing-multi}
    \[ \tau_t = \frac{1}{\delta \ln(c_0+t)}\,,\qquad \alpha_t = \frac{\beta}{\sqrt{c_0 + t}} \,,\]
        Then, 
    for all $t\geqslant 0$,
   \begin{equation}
\label{eq:stau}
       s_{t,\tau_t} \leqslant \frac{32\delta M_1  }{\beta \sqrt{c_0+t}} + e^{-\frac{\beta}{2}\po \sqrt{c_0 + t} - \sqrt{c_0}\pf }s_0 \,. 
   \end{equation}
\end{prop}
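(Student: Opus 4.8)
# Proof Proposal for the Annealing Proposition

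\textbf{Overview of the strategy.} The plan is to derive a differential inequality for $s_{t,\tau_t}$ and then integrate it, in the same spirit as the proof of Proposition~\ref{prop:Lyapunov-multi} but now carefully tracking the extra error terms produced by the time-dependence of $\tau_t$ and $\alpha_t$. There are three distinct sources of time-derivative in $s_{t,\tau_t}$: (i) the evolution of $\nu_t^i$ and $\hat\nu_t^i$ through the PDE~\eqref{flow-multi-type-annealed}, which is exactly the mechanism already analyzed in Proposition~\ref{prop:Lyapunov-multi} and yields a dissipation term roughly of the form $-\lambda_t s_{t,\tau_t}$ with $\lambda_t \asymp \tau_t/\kappa_t$ where $\kappa_t = e^{M_0/\tau_t}/(8\pi^2)$; (ii) the explicit dependence of $\rho_{t,\tau}^i$ on $\tau$ through $\tau = \tau_t$, i.e. a term $\partial_\tau \big(\mathcal H(\nu_t^i|\rho_{t,\tau}^i)\big)\big|_{\tau=\tau_t}\cdot \dot\tau_t$; and (iii) the change in the ``velocity'' field $U_i\star\hat\nu_t^{-i}$ inside $\rho_{t,\tau_t}^i$, which is already present in the fixed-temperature case and is controlled using $\dot{\hat\nu}_t^i = \alpha_t(\nu_t^i - \hat\nu_t^i)$ together with the bound $M_2$ on second derivatives. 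First I would assemble these three contributions into a single inequality of the schematic form
\[
\frac{d}{dt} s_{t,\tau_t} \;\leqslant\; -\lambda_t\, s_{t,\tau_t} \;+\; C_1 \alpha_t \sqrt{s_{t,\tau_t}} \;+\; C_2 \frac{|\dot\tau_t|}{\tau_t^2}\,,
\]
where the middle term comes from the coupling between $\nu_t$ and $\hat\nu_t$ (as in Proposition~\ref{prop:Lyapunov-multi}, using Pinsker/Talagrand to convert total variation or Wasserstein discrepancies into $\sqrt{\mathcal H}$) and the last term is the annealing cost.

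\textbf{Plugging in the schedules.} With $\tau_t = 1/(\delta\ln(c_0+t))$ one computes $\dot\tau_t = \delta\tau_t^2/(c_0+t) \cdot(\text{sign})$, so $|\dot\tau_t|/\tau_t^2 = \delta/(c_0+t)$ — crucially this is $O(1/(c_0+t))$, i.e.\ summable against the dissipation. More importantly, $\kappa_t = e^{M_0/\tau_t}/(8\pi^2) = (c_0+t)^{\delta M_0}/(8\pi^2)$, hence $\lambda_t \gtrsim \tau_t/\kappa_t \asymp 8\pi^2/\big(\delta \ln(c_0+t)\,(c_0+t)^{\delta M_0}\big)$. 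The constraint $\delta \leqslant 1/(12 M_0)$ guarantees $\delta M_0 \leqslant 1/12$, so $(c_0+t)^{\delta M_0}$ grows slower than any positive power — in particular much slower than $\sqrt{c_0+t}$, which is what makes $\alpha_t = \beta/\sqrt{c_0+t}$ the dominant decaying scale. The role of the conditions~\eqref{eq:cond_annealing-multi}--\eqref{eq:cond_annealing-multi-c0} is precisely to ensure that at the target profile $s_{t,\tau_t} \approx 32\delta M_1/(\beta\sqrt{c_0+t})$ the dissipation $\lambda_t s_{t,\tau_t}$ dominates the sum of the two error terms: the condition $\delta M_2 c_0^{\delta M_0}\ln c_0 \geqslant 8\pi^2$ is a ``cold-start'' requirement forcing $\lambda_t \leqslant \alpha_t$-type comparisons to go the right way, and $c_0 \geqslant 144/\beta^2$ absorbs the constants coming from Pinsker.

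\textbf{Closing the argument via a comparison/barrier function.} Rather than integrating directly, I would verify that the explicit right-hand side of~\eqref{eq:stau},
\[
\phi(t) := \frac{32\delta M_1}{\beta\sqrt{c_0+t}} + e^{-\frac{\beta}{2}(\sqrt{c_0+t}-\sqrt{c_0})} s_0\,,
\]
is a supersolution of the differential inequality: one checks $\phi(0) = \frac{32\delta M_1}{\beta\sqrt{c_0}} + s_0 \geqslant s_0 = s_{0,\tau_0}$ (using $\mathcal H \geqslant 0$), and that $\dot\phi(t) \geqslant -\lambda_t\phi(t) + C_1\alpha_t\sqrt{\phi(t)} + C_2\delta/(c_0+t)$ wherever $s_{t,\tau_t}$ could touch $\phi(t)$ from below. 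The exponential piece handles the transient decay of the initial entropy against the (slowly vanishing) dissipation rate $\lambda_t$, using $\beta \leqslant 3/(\delta^3 M_2)$ to compare $\beta/2$ with $\lambda_t$; the algebraic piece $32\delta M_1/(\beta\sqrt{c_0+t})$ is the ``equilibrium'' level at which $\lambda_t \cdot \text{(level)} \sim C_1\alpha_t\sqrt{\text{(level)}}$ — note $\sqrt{32\delta M_1/(\beta\sqrt{c_0+t})}\cdot\alpha_t \asymp \sqrt{c_0+t}^{-5/4}$ must be shown $\lesssim \lambda_t\cdot 32\delta M_1/(\beta\sqrt{c_0+t})$, and $M_1$ (the sum, not the max) enters here because all $K$ relative entropies are summed. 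A standard comparison lemma for scalar ODEs (or a Grönwall-type argument on $s_{t,\tau_t} - \phi(t)$) then yields $s_{t,\tau_t} \leqslant \phi(t)$ for all $t$.

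\textbf{Main obstacle.} The hard part will be step (ii)--(iii): obtaining the entropy-production inequality \emph{with the correct, explicit constants} when both $\tau$ and the potential $U_i\star\hat\nu_t^{-i}$ inside $\rho_{t,\tau_t}^i$ are moving. In the fixed-temperature Proposition~\ref{prop:Lyapunov-multi} the zero-sum structure produces exact cancellations of the ``bad'' cross terms in $\sum_i \frac{d}{dt}\mathcal H(\nu_t^i|\rho_t^i)$; one must check that these cancellations survive verbatim in the annealed setting (they should, since the zero-sum identity $W_{ij}=-W_{ji}$ is temperature-independent), while the genuinely new term $\dot\tau_t\,\partial_\tau\big[\sum_i \mathcal H(\nu_t^i|\rho_{t,\tau}^i) + \mathcal H(\hat\nu_t^i|\rho_{t,\tau}^i)\big]$ must be bounded by something like $C M_1 |\dot\tau_t|/\tau_t^2$ — this uses $\partial_\tau \log\rho_{t,\tau}^i = \tau^{-2}(U_i\star\hat\nu_t^{-i} - \text{normalizer})$ and the uniform bound $\|U_i\|_\infty$, summed to give $M_1$. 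Bookkeeping the interplay between $\lambda_t$ (exponentially small in $1/\tau_t$, hence only \emph{logarithmically} small in $t$ thanks to $\delta\leqslant 1/(12M_0)$) and the polynomial rates $\alpha_t, |\dot\tau_t|/\tau_t^2$ is delicate but mechanical once the inequality is in hand.
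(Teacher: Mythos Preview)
Your high-level decomposition into ``dissipation from the PDE'' plus ``annealing cost $|\dot\tau_t|/\tau_t^2$'' plus ``barrier/Gr\"onwall'' is exactly the paper's scheme, and your computation of the annealing term $|\dot\tau_t|/\tau_t^2 = \delta/(c_0+t)$ is correct. However, the differential inequality you write down is wrong in two linked ways, and these are not cosmetic.

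First, there is no separate term $C_1\alpha_t\sqrt{s_{t,\tau_t}}$. Your item (iii) --- the coupling between $\nu_t$ and $\hat\nu_t$ through $U_i\star\hat\nu_t^{-i}$ --- is \emph{already} part of item (i): it is precisely what Proposition~\ref{prop:Lyapunov-multi} controls. The whole point of that proposition (as opposed to the Kim et al.\ argument that the paper explicitly discards) is that the cross terms $\sqrt{\mathcal H(\nu_t^i|\rho_t^i)}\sqrt{\mathcal H(\hat\nu_t^j|\rho_t^j)}$ are bounded \emph{linearly} in $s_t$ via AM--GM and then absorbed into the dissipation, provided $\alpha_t\leqslant \bar\alpha_{0,t}$. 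So the correct inequality is simply
\[
\partial_t s_{t,\tau_t} \leqslant -\lambda_t\, s_{t,\tau_t} + \frac{4\delta M_1}{c_0+t}\,,
\]
with no $\sqrt{s}$ middle term. Second, and consequently, $\lambda_t$ is \emph{not} $\tau_t/\kappa_t$: because $\hat\nu_t$ obeys a pure transport equation with no Laplacian, the only dissipation for $\mathcal H(\hat\nu_t|\rho_t)$ is the $-\alpha_t\mathcal H(\hat\nu_t|\rho_t)$ coming from $\partial_t\hat\nu_t=\alpha_t(\nu_t-\hat\nu_t)$. Hence $\lambda_t=\min(\tau_t/(2\kappa_t),\alpha_t/4)=\alpha_t/4$. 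If you kept a $C_1\alpha_t\sqrt{s}$ term together with $\lambda_t=\alpha_t/4$, the equilibrium level would be $s\sim (4C_1)^2$, a constant, and the argument could not close.

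This also means you have the roles of the hypotheses inverted. The condition $\delta M_2 c_0^{\delta M_0}\ln c_0\geqslant 8\pi^2$ ensures $\kappa_t M_2/\tau_t\geqslant 1$ for all $t\geqslant 0$, so that $\bar\alpha_{0,t}=\tau_t^3/(53\kappa_t^3 M_2^2)$; then $\delta\leqslant 1/(12M_0)$ and $\beta\leqslant 3/(\delta^3 M_2)$ together give $\alpha_t/\bar\alpha_{0,t}\leqslant 1$ (this is the computation $53\delta^3\beta M_2^2(c_0+t)^{3\delta M_0-1/2}\ln^3(c_0+t)/(8\pi^2)^3\leqslant 1$), which is what licenses invoking Proposition~\ref{prop:Lyapunov-multi} at each instant. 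The condition $c_0\geqslant 144/\beta^2$ has nothing to do with Pinsker; it is used only in the final Gr\"onwall/barrier step so that the derivative of $C/\sqrt{c_0+t}$ is dominated by $(\alpha_t/4)\cdot C/\sqrt{c_0+t}$. Once you have the clean two-term inequality with $\lambda_t=\beta/(4\sqrt{c_0+t})$, your barrier function $\phi(t)$ and the comparison argument go through exactly as you describe.
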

The proof of Proposition \ref{prop:annealing-multi} can be found in Section  \ref{sec:prooffor23}. 
As a corollary of Proposition \ref{prop:annealing-multi}, the next theorem characterizes the large-time  closedness of the annealed dynamics  to the proximal-Gibbs measures $\{\nu^{i,\ast}_{\tau_t}\}_{i=1}^K$. 

\begin{thm}\label{thm:proxygibbs} 
     Under the same setting as Proposition \ref{prop:annealing-multi}, there exists a constant $C>0$ depending only on $M_0, M_1, M_2$ and $s_0$ such that  for sufficiently large $t$,
     $$
     \|\nu^i_t - \nu^{i,\ast}_{\tau_t}\|_{TV}  \leq C(t+c_0)^{-1/4}, \ \forall i\in\cco 1,K\ccf.
     $$
\end{thm}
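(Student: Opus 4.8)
The plan is to combine the bound \eqref{eq:stau} on $s_{t,\tau_t}$ from Proposition \ref{prop:annealing-multi} with the entropy-sandwich inequality \eqref{eq:entropy_ineq} of Proposition \ref{prop:NEmulti} and Pinsker's inequality, mimicking the argument used to derive Theorem \ref{thm: bdnustat} from Proposition \ref{prop:Lyapunov-multi}. The key observation is that, at the frozen temperature $\tau=\tau_t$, the quantity $\NI_{\tau_t}(\hat\nu_t)$ equals $\tau_t\,\mathcal H(\hat\nu_t|\rho_{t,\tau_t})$ by the analogue of \eqref{eq:NIhat}, and hence $\NI_{\tau_t}(\hat\nu_t)\leqslant \tau_t\, s_{t,\tau_t}$. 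Proposition \ref{prop:NEmulti} applied with $\tau=\tau_t$ then gives $\sum_{i}\mathcal H(\hat\nu_t^i|\nu^{i,\ast}_{\tau_t})\leqslant \tau_t^{-1}\NI_{\tau_t}(\hat\nu_t)\leqslant s_{t,\tau_t}$.

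Next I would transfer this bound from $\hat\nu_t$ to $\nu_t$ by a triangle inequality in total variation. For each $i$,
\[
\|\nu_t^i-\nu^{i,\ast}_{\tau_t}\|_{TV}\leqslant \|\nu_t^i-\rho^i_{t,\tau_t}\|_{TV}+\|\rho^i_{t,\tau_t}-\hat\nu_t^i\|_{TV}+\|\hat\nu_t^i-\nu^{i,\ast}_{\tau_t}\|_{TV},
\]
and each term is controlled via Pinsker's inequality by $\mathcal H(\nu_t^i|\rho^i_{t,\tau_t})$, $\mathcal H(\hat\nu_t^i|\rho^i_{t,\tau_t})$, and $\mathcal H(\hat\nu_t^i|\nu^{i,\ast}_{\tau_t})$ respectively; summing over $i$, using $(a+b+c)^2\leqslant 3(a^2+b^2+c^2)$ and the bounds above yields $\sum_i\|\nu_t^i-\nu^{i,\ast}_{\tau_t}\|_{TV}^2\lesssim s_{t,\tau_t}$.

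Finally I would plug in the explicit estimate \eqref{eq:stau}. Since $\alpha_t=\beta/\sqrt{c_0+t}$ and the exponential term $e^{-\frac\beta2(\sqrt{c_0+t}-\sqrt{c_0})}s_0$ decays faster than any power of $t$, for $t$ large the right-hand side of \eqref{eq:stau} is dominated by its first term, of order $(c_0+t)^{-1/2}$, with a constant depending only on $\delta,\beta,M_1$ — hence only on $M_0,M_1,M_2$ and (through the exponential term) $s_0$. Thus $\sum_i\|\nu_t^i-\nu^{i,\ast}_{\tau_t}\|_{TV}^2\leqslant C(c_0+t)^{-1/2}$, and taking square roots (and absorbing the sum over the fixed number $K$ of players into $C$) gives the claimed bound $\|\nu_t^i-\nu^{i,\ast}_{\tau_t}\|_{TV}\leqslant C(t+c_0)^{-1/4}$ for each $i$.

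I do not anticipate a serious obstacle here: the only mild subtlety is checking that Proposition \ref{prop:NEmulti} and the identity \eqref{eq:NIhat} are genuinely applicable with the time-dependent but instantaneously-frozen temperature $\tau_t$ — i.e. that $\nu^{i,\ast}_{\tau_t}$ and $\rho^i_{t,\tau_t}$ are defined consistently so that the sandwich bound holds pointwise in $t$ — and bookkeeping the dependence of $C$ on the parameters $\delta,\beta,c_0$, which are themselves fixed functions of $M_0,M_1,M_2$.
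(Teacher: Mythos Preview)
Your proposal is correct and follows essentially the same route as the paper's proof: the same three-term triangle inequality in total variation through $\rho^i_{t,\tau_t}$ and $\hat\nu_t^i$, Pinsker on each piece, the entropy-sandwich bound from Proposition~\ref{prop:NEmulti} at the frozen temperature $\tau_t$ for the third term, and then insertion of the estimate~\eqref{eq:stau} with the observation that the exponential contribution is eventually dominated by the $(c_0+t)^{-1/2}$ term. The paper records the explicit constant $12$ from $(a+b+c)^2\leqslant 3(a^2+b^2+c^2)$ and Pinsker, but otherwise the arguments coincide.
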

As a consequence of Theorem \ref{thm:proxygibbs}, the next theorem establishes the convergence of the annealed dynamics to the MNE of the unregularized energies with respect to the NI-error. 

\begin{thm} \label{thm:mainanneal}
    Under the same setting as Proposition \ref{prop:annealing-multi}, there exists a constant $C^\prime>0$ depending only on $M_0, M_1, M_2, L$, $s_0$ and $d_i,i\in\cco 1,K\ccf$ such that  for sufficiently large $t$,
    $$
     \NI(\nu_t) \leq \frac{C^\prime \log \log t}{\log t}.
    $$
\end{thm}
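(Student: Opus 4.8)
The plan is to split the unregularized error at time $t$ into a \emph{tracking error} between $\nu_t$ and the regularized equilibrium $\nu^\ast_{\tau_t}$, and a \emph{regularization bias} $\NI(\nu^\ast_{\tau_t})$ quantifying the cost of replacing $\{\mE_i\}$ by $\{\mE_{i,\tau_t}\}$. For the tracking error, one first checks that $\nu\mapsto\NI(\nu)$ is Lipschitz in total variation on each marginal: writing
\[
\NI(\nu)=\sum_{i=1}^K\Big[\int_{\mX_i}U_i\star\nu^{-i}\,d\nu^i-\min_{x_i\in\mX_i}U_i\star\nu^{-i}(x_i)\Big],
\]
multilinearity of $(\nu^1,\dots,\nu^K)\mapsto\int U_i\,d(\nu^1\otimes\cdots\otimes\nu^K)$ together with $\|U_i\|_\infty\le M_0$ and a telescoping over the marginals, plus the elementary bound $|\min f-\min g|\le\|f-g\|_\infty$ for the second term, gives $|\NI(\nu)-\NI(\tilde\nu)|\le c_K M_0\sum_{j=1}^K\|\nu^j-\tilde\nu^j\|_{TV}$ for an explicit combinatorial constant $c_K$. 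Combining this with Theorem~\ref{thm:proxygibbs}, which gives $\|\nu_t^i-\nu^{i,\ast}_{\tau_t}\|_{TV}\le C(t+c_0)^{-1/4}$, yields $|\NI(\nu_t)-\NI(\nu^\ast_{\tau_t})|=O\big((t+c_0)^{-1/4}\big)$, a term that is negligible compared with the target rate.

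For the regularization bias I would use that $\nu^\ast_{\tau_t}$ solves the fixed-point equation~\eqref{eq:fixedpointmulti} at temperature $\tau_t$, so $\NI_{\tau_t}(\nu^\ast_{\tau_t})=0$, and therefore
\[
\NI(\nu^\ast_{\tau_t})=\NI(\nu^\ast_{\tau_t})-\NI_{\tau_t}(\nu^\ast_{\tau_t})=-\tau_t\sum_{i=1}^K\mH(\nu^{i,\ast}_{\tau_t})+\sum_{i=1}^K\Big[\inf_{\mu^i}\mE_{i,\tau_t}(\mu^i,\nu^{-i,\ast}_{\tau_t})-\inf_{\mu^i}\mE_i(\mu^i,\nu^{-i,\ast}_{\tau_t})\Big].
\]
The first sum is nonpositive, since entropies on the (unit-volume) torus are nonnegative, and may be dropped. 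For the second sum, using $\inf_{\mu^i}\mE_i(\mu^i,\nu^{-i})=\min_{x_i}U_i\star\nu^{-i}(x_i)=:m_i$ and $\inf_{\mu^i}\mE_{i,\tau}(\mu^i,\nu^{-i})=-\tau\log\int_{\mX_i}\exp(-U_i\star\nu^{-i}/\tau)$, each summand equals the ``softmin--min gap'' $-\tau\log\int_{\mX_i}\exp\big(-(U_i\star\nu^{-i}-m_i)/\tau\big)$, which is nonnegative.

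The core estimate is an upper bound on this gap. Since $\|\nabla U_i\|_\infty\le L$, the function $U_i\star\nu^{-i}$ is $L$-Lipschitz, so on a ball of radius $\tau/L$ around a minimizer it exceeds $m_i$ by at most $\tau$; this ball is embedded in a fundamental domain of $\mX_i$ once $\tau<L/2$, which holds for $t$ large. Bounding the integral below by the contribution of this ball gives $\int_{\mX_i}\exp(-(U_i\star\nu^{-i}-m_i)/\tau)\ge e^{-1}\omega_{d_i}(\tau/L)^{d_i}$ with $\omega_{d_i}$ the volume of the unit ball in $\R^{d_i}$, whence
\[
0\le-\tau\log\int_{\mX_i}\exp\big(-(U_i\star\nu^{-i}-m_i)/\tau\big)\le\tau\big(1-\log\omega_{d_i}+d_i\log L+d_i\log(1/\tau)\big).
\]
Summing over $i$ and substituting $\tau_t=1/(\delta\ln(c_0+t))$, so that $\log(1/\tau_t)=\log\delta+\log\ln(c_0+t)$ and $\tau_t\log(1/\tau_t)=O(\log\log t/\log t)$, yields $\NI(\nu^\ast_{\tau_t})=O(\log\log t/\log t)$. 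Combined with the tracking bound and with $\NI(\nu_t)\ge0$, this gives $\NI(\nu_t)\le C'\log\log t/\log t$ for $t$ large, with $C'$ depending only on $M_0,M_1,M_2,L$, $s_0$ and the $d_i$; the dependence on $M_1,M_2,s_0$ enters through the constant $C$ of Theorem~\ref{thm:proxygibbs}.

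I expect the main obstacle to be the softmin--min gap estimate: obtaining a bound that degrades only like $\tau\log(1/\tau)$ \emph{without} any nondegeneracy assumption on the minimizers of $U_i\star\nu^{-i}$ — the $\log\log t$ in the final rate is precisely the price of this lack of nondegeneracy — together with handling the toric geometry so that the ball used in the lower bound is genuinely embedded for $t$ large. A secondary nuisance is the bookkeeping of dimension- and constant-dependence, so that $C'$ depends only on the listed quantities.
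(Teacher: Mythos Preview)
Your proof is correct and follows the same two-step decomposition as the paper: a tracking error controlled by Theorem~\ref{thm:proxygibbs} via a total-variation Lipschitz bound on $\NI$, plus a regularization bias $\NI(\nu^\ast_{\tau_t})$. The only difference is in the treatment of the bias: the paper packages this as Lemma~\ref{lem:laplacem} (deferring the Laplace-type estimate to \cite{domingo2020mean}), whereas you unpack it directly via the identity $\NI(\nu^\ast_{\tau})=\sum_i\big[-\tau\log\int e^{-(V_i-m_i)/\tau}-\tau\mH(\nu^{i,\ast}_\tau)\big]$ and the ball-around-minimizer lower bound. Your inline argument is cleaner and fully self-contained, and makes transparent that the $\log\log t$ factor comes precisely from the $\tau\log(1/\tau)$ softmin--min gap; the paper's route via Lemma~\ref{lem:laplacem} buys a reusable statement but hides this mechanism. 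One small caveat: dropping the entropy term relies on the torus having unit volume; if not, you pick up an extra $O(\tau_t)$ which is harmless for the final rate.
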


\begin{remark}
 The convergence rate $O(\log \log t/\log t)$  in Theorem \ref{thm:mainanneal} was previously obtained in \cite{lu2023two} for an annealed two-scale mean-field GDA dynamics. However, we emphasize again that the key ingredient of the proof --- Proposition \ref{prop:annealing-multi}, does not require scale-separation among the players.  
\end{remark}

\section{Proofs of Main Results}
In this section, we provide the proofs of the main theorems and the key propositions.

\subsection{Proof of the result in Section  \ref{sec:setting}}  \label{sec:proof1}

\begin{proof}[Proof of Proposition \ref{prop:NEmulti}]
Consider a solution of~\eqref{flow-multi-type} with finite initial entropy. Since $\partial_t \hat\nu_t=\alpha(\nu_t - \hat\nu_t) $,
\begin{multline*}
    \|\hat \nu_{t+s}-\hat\nu_t\|_{TV} \leqslant \alpha \int_t^{t+s} \|\hat \nu_u-\nu_u\|_{TV}\dd u \\
    \leqslant \sqrt{2} \alpha \int_t^{t+s} \sqrt{\|\hat \nu_u-\rho_u\|_{TV}^2 + \|\rho_u-\nu_u\|_{TV}^2}\dd u  \leqslant  \frac{4\sqrt{s_0}\alpha}{\lambda} e^{-\lambda t/2}\,.
\end{multline*}
By completeness, $\hat\nu_t$ then converge in total variation to  some $\nu^*_{\tau}$.  Since $\|\hat\nu_t - \rho_t\|_{TV}\leqslant \sqrt{2s_t}\rightarrow0$, we also get that $\rho_t$ converge to $\nu^*_{\tau}$ as $t\rightarrow \infty$.
Moreover, for all $i\in\cco 1,K\ccf$, 
\[\|U_i\star \hat\nu_{t}^{-i} - U_i\star \nu^{-i,*}_{\tau}\|_{\infty} \leqslant M_0 \| \|  \hat\nu_{t}^{-i} -  \nu^{-i,*}_{\tau}\|_{TV} \underset{t\rightarrow\infty}\longrightarrow{0}\,.\]
This shows that $\rho_t^i$ converges to the density proportional to $\exp(-U_i\star\nu^{-i,*}_{\tau})$. By uniqueness of the limit, we get that $\nu^*_{\tau}$ solves the fixed-point equation~\eqref{eq:fixedpointmulti}, hence is an MNE.

    Using this fixed point equation   and the pairwise zero-sum assumption that  $W_{ij}=-W_{ji}$, which in particular implies that for any $\nu\in\mathcal P(\mX)$,
\[\sum_{i=1}^K \sum_{j\neq i} \int_{\mX_i\times \mX_j} W_{ij}\nu^i \nu^j = \sum_{i=1}^K  \int_{\mX_i\times\mX_{-i}} U_i \nu^i   \nu^{-i}   = 0 \,, \]
we get, for any $\nu \in \mathcal P(\mX)$,
\begin{align*}
    \NI(\nu)     & \geqslant \sum_{i=1}^K \co \mE_{i,\tau}\po\nu^i,\nu^{- i}\pf  -   \mE_{i,\tau}  \po\nu^{i,\ast}_{\tau},\nu^{- i}\pf  \cf \\
    &= \sum_{i=1}^K \co \tau \int_{\mX_i} \nu^i \ln \nu^i - \tau \int_{\mX_i} \nu^{i,\ast}_{\tau} \ln \nu^{i,\ast}_{\tau}  - \sum_{j\neq i} \int_{\mX_i\times \mX_j} W_{ij}\nu^{i,\ast}_{\tau}   \nu^j \cf  \\
    %&= \pierre{ \sum_{i=1}^K \co \tau \int_{\T^d} \nu^i \ln \nu^i - \tau \int_{\T^d} \nu^{i,\ast}_{\tau} \ln \nu^{i,\ast}_{\tau}  - \int_{\mX_i\times\mX_{-i}}  U_i \nu^{i,\ast}_{\tau}  \otimes \nu^{-i} \cf  }\\
    &= \sum_{i=1}^K \co \tau \int_{\mX_i} \nu^i \ln \nu^i - \tau \int_{\mX_i} \nu^{i,\ast}_{\tau} \ln \nu^{i,\ast}_{\tau}  - \sum_{j\neq i} \int_{\mX_i\times \mX_j} W_{ij}\nu^{i,\ast}_{\tau}   (\nu^j-\nu_*^j) \cf  \\
    &= \sum_{i=1}^K \co \tau \int_{\mX_i} \nu^i \ln \nu^i - \tau \int_{\mX_i} \nu^{i,\ast}_{\tau} \ln \nu^{i,\ast}_{\tau}  \cf +  \sum_{j=1}^K \int_{\mX_i\times \mX_j} \po \sum_{i\neq j} W_{ji}\nu^{i,\ast}_{\tau} \pf   (\nu^j-\nu_*^j)   \\
    %&= \pierre{ \sum_{i=1}^K \co \tau \int_{\T^d} \nu^i \ln \nu^i - \tau \int_{\T^d} \nu^{i,\ast}_{\tau} \ln \nu^{i,\ast}_{\tau}  - \int_{\mX_i\times\mX_{-i}}  U_i \nu^{i,\ast}_{\tau}  \otimes (\nu^{-i}-\nu_*^{-i}) \cf  }\\
    &= \tau \sum_{i=1}^K \co   \int_{\mX_i} \nu^i \ln \nu^i -   \int_{\mX_i} \nu^{i,\ast}_{\tau} \ln \nu^{i,\ast}_{\tau}  - \int_{\mX_i} \ln \nu^{i,\ast}_{\tau} (\nu^i - \nu^{i,\ast}_{\tau}) \cf \\
    &= \tau \sum_{i=1}^K \mathcal H \po \nu^i|\nu^{i,\ast}_{\tau}\pf \,.  
\end{align*}
This implies the uniqueness of the mixed Nash equilibrium. Indeed, if $(\nu^1,\dots,\nu^K)$ is a Nash equilibrium, then the left-hand side of \eqref{eq:entropy_ineq} is zero, implying that $\nu^i= \nu^{i,\ast}_{\tau}$ for all $i\in\cco 1,K\ccf$.
\end{proof}

\subsection{Proofs of results in Section \ref{sec:fixedtemp}} \label{sec:prooffor22}

\begin{proof}[Proof of Proposition \ref{prop:Lyapunov-multi}]
Since $\rho_t^i$ is the invariant measure of the Markov generator $\na  U_i\star \hat\nu_t^{-i} \cdot \na +  \tau \Delta$, by usual computations on the entropy dissipation along the corresponding flow,
\begin{eqnarray*}
\partial_t \mathcal H\po \nu_t^i | \rho_t^i \pf &= & (\partial_s)_{|s=t} \mathcal H\po \nu_s^i | \rho_t^i\pf + (\partial_s)_{|s=t} \mathcal H\po \nu_t^i | \rho_s^i \pf \\
& = & -  \tau \mathcal I\po \nu_t^i  | \rho_t^i\pf-\int_{\mX_i} \nu_t^i \partial_t \ln \rho_t^i \\
& = & -  \tau \mathcal I\po \nu_t^i  | \rho_t^i \pf + \frac1 \tau \int_{\mX_i} \partial_t \po \sum_{j\neq i} W_{ij}\star \hat\nu_t^{j} \pf \po \rho_t^i - \nu_t^i\pf   \\
& = & -  \tau \mathcal I\po \nu_t^i  | \rho_t^i\pf + \frac \alpha \tau \sum_{j\neq i} \int_{\mX_i} W_{ij} \star (\nu_t^{j} - \hat \nu_t^{j})  \po \rho^i_t  - \nu_t^i\pf   \\
& \leqslant &  -  \tau \mathcal I\po \nu_t^i  | \rho_t^i \pf + \frac{\alpha} \tau  \sum_{j\neq i}  \|\na_{x_i}  W_{ij}\star(\nu_t^{j} - \hat \nu_t^{j})\|_{\infty} \mathcal W_1 \po  \rho^i_t ,\nu_t^i\pf   \,.
\end{eqnarray*}
Using that $\|U_i\star \nu^{-i}\|_\infty \leqslant M_0$, reyling on the log-Sobolev inequality for the Lebesgue measure on $\T^d$ (with constant $1/(8\pi^2)$, see \cite[Proposition 5.7.5(ii)]{BakryGentilLedoux}) and the Holley-Stroock perturbation result (e.g. \cite[Proposition 5.1.6]{BakryGentilLedoux}), we see that each $\rho_t^i$  (hence $\rho_t$ by tensorization)  satisfies a log-Sobolev inequality with constant $ \kappa $ given by \eqref{eq:kappa}, meaning that
\begin{equation*}
%    \label{eq:LSI}
    \forall \mu \in\mathcal P(\mX)\,,\qquad \mathcal H(\mu|\rho_t) \leqslant \kappa \mathcal I(\nu|\rho_t)\,.
\end{equation*}
 From \cite{Otto2000}, it implies the Talagrand inequality
\begin{equation*}
 %   \label{eq:Talagrand}
\forall \mu \in\mathcal P(\mX)\,,\qquad \mathcal W_2^2(\mu,\rho_t) \leqslant 4\kappa \mathcal H(\nu|\rho_t)\,.
\end{equation*}
At that point, if we were following \cite{kim2023symmetric}, we would have a learning rate $\alpha_t$ depending on time and, from the previous bounds using the log-Sobolev and Talagrand inequalities for $\rho_t$, that $\mathcal W_1\leqslant\mathcal W_2$, we would deduce that 
\begin{eqnarray*}
\partial_t \mathcal H\po \nu_t^i | \rho_t^i \pf
& \leqslant &  - \frac{\tau }{\kappa  } \mathcal H\po \nu_t^i  | \rho_t^i \pf + 4  \frac{\alpha_t}{\tau} \sum_{j\neq i} \|\na_{x_i} W_{ij} \|_{\infty} \sqrt{\kappa \mathcal H \po \nu_t^i | \rho^i_t\pf}   \,,
\end{eqnarray*}
from which we get that $ \mathcal H\po \nu_t^i |\rho_t^i\pf$ vanishes as $t\rightarrow \infty$ provided this is the case of $\alpha_t$ -- which prevents exponential convergence so we want to avoid this condition.

Rather, we bound 
$$\begin{aligned}
    \|\na_{x_i} W_{ij}\star( \nu_t^{j} - \hat\nu_t^{j})\|_{\infty}  & 
 \leqslant   \|\na_{x_ix_j}^2 W_{ij}\|_\infty \mathcal W_1(\nu_t^{j} ,\hat\nu_t^{j}) 
 \\
 & \leqslant \|\na_{x_ix_j}^2 W_{ij}\|_\infty \co \mathcal W_1(\nu_t^{j} ,\rho_t^j) + \mathcal W_1(\rho_t^j,\hat\nu_t^{j})\cf .
\end{aligned}
$$
Plugging this in the previous inequality and then using the log-Sobolev and Talagrand inequalities for $\rho_t$ yields
\begin{align}
    \partial_t \mathcal H\po \nu_t | \rho_t \pf 
& \leqslant  - \tau \mathcal I\po \nu_t  | \rho_t \pf + \frac{\alpha}{\tau} \sum_{i=1}^K \sum_{j\neq k} \Big[\|\na^2_{x_ix_{j}} W_{ij}\|_\infty \mathcal W_1 \po \rho^i_t  , \nu_t^i\pf \nonumber\\
&\hspace{4cm} \times  \po   \mathcal W_1\po \nu_t^{j}  , \rho^{j}_t  \pf   +  \mathcal W_1\po \rho^{j}_t, \hat\nu_t^{j}  \pf  \pf   \Big]\nonumber \\
& \leqslant   - \frac{\tau}{\kappa } \mathcal H\po \nu_t  | \rho_t \pf + 4\kappa \frac{\alpha}\tau \sum_{i=1}^K \sum_{j\neq k} \Bigg[  \|\na^2_{x_ix_{j}} W_{ij}\|_\infty \sqrt{\mathcal H \po  \nu_t^i |\rho^i_t \pf}  \nonumber \\
&\hspace{4cm} \times   \po   \sqrt{\mathcal H\po \nu^{j}_t | \rho^{j}_t   \pf}   + \sqrt{ \mathcal H\po \hat\nu_t^{j} |\rho^{j}_t    \pf}  \pf\Bigg]  \nonumber  \\
& \leqslant   \po (2+\theta) \alpha M' - \frac{\tau}{\kappa }\pf  \mathcal H\po \nu_t  | \rho_t\pf +\frac{\alpha M' }{\theta} \mathcal H\po \hat\nu_t |\rho_t    \pf  \,, %\nonumber  \\
 %& \leqslant &  -   \frac{\tau}{2\kappa }    \mathcal H\po \nu_t  | \rho_{A_t} \pf + 16\kappa^3    \frac{\alpha^2}{\tau^3} M^2 \po    \mathcal H\po \mu_t | \rho_{B_t}  \pf    +   \mathcal H\po \hat \mu_t |\rho_{B_t}   \pf   \pf \,.  
 \label{eq:rtRt}
\end{align}
with $M' = 2\kappa  M_2/\tau $, for any $\theta>0$. We have also used the tensorization property of the relative entropy.

Now, using that $\hat\nu_t^i$ is a probability measure for all $t\geqslant 0$,
\begin{eqnarray}
\partial_t \mathcal H\po \hat\nu_t^i | \rho_t^i \pf &= & \int_{\mX} \ln \frac{\hat\nu_t^i}{\rho_t^i} \partial_t \hat\nu_t^i - \int_{\mX_i} \partial_t \po \ln \rho_t^i \pf \hat\nu_t^i \nonumber\\
 &= & \alpha \int_{\mX} \ln \frac{\hat\nu_t^i}{\rho_t^i}\po \nu_t^i - \hat\nu_t^i\pf  + \frac1\tau  \int_{\mX_i} \partial_t \po \sum_{j\neq i} W_{ij} \star \hat\nu_t^{j}\pf  \po  \hat\nu_t^i - \rho_t^i \pf\nonumber  \\ 
  &= & \alpha \int_{\mX} \ln \frac{\hat\nu_t^i}{\rho_t^i}\nu_t^i   - \alpha \mathcal H\po \hat\nu_t^i | \rho_t^i \pf   + \frac \alpha \tau \sum_{j\neq i}  \int_{\mX_i} W_{ij}\star (\nu_t^{j} -  \hat\nu_t^{j})  \po  \hat\nu_t^i - \rho_t^i \pf  \,. \label{uneeq}
   %&= & \alpha \int_{\mX} \ln \frac{\hat\nu_t}{\rho_t} \nu_t  - \alpha \mathcal H\po \hat\nu_t | \rho_t \pf  + \frac \alpha \tau  \int_{\mX} W  \po  \hat\nu_t - \rho_{A_t}\pf \otimes (\mu_t -  \hat \mu_t)  \,.
\end{eqnarray}
For the first term, we use that
\[ \int_{\T^d} \ln \frac{\hat\nu_t^i}{\rho_t^i } \nu_t^i = \mathcal H \po \nu_t^i | \rho_t^i\pf - \mathcal H \po \nu_t^i | \hat\nu_t^i\pf \leqslant  \mathcal H \po \nu_t^i | \rho_t^i \pf\,. \] 
To treat the last term, for $i\neq j$, using that $W_{ij}=-W_{ji}$, write
\begin{eqnarray*}
\lefteqn{ 
\int_{\mX_i\times\mX_j} W_{ij}  \po  \hat\nu_t^i - \rho_{t}^i\pf  (\nu_t^j -  \hat \nu_t^j)    + \int_{\mX_i\times\mX_j} W_{ji}  \po  \hat\nu_t^j - \rho_{t}^j\pf  (\nu_t^i -  \hat\nu_t^i)
}\\
&= & \int_{\mX_i\times\mX_j}  W_{ij}  \po  \nu_t^i - \rho_{t}^i\pf (\nu_t^j -  \hat \nu_t^j)    +  \int_{\mX_i\times\mX_j}  W_{ji}  \po  \nu_t^j - \rho_{t}^j\pf (\nu_t^i -  \hat\nu_t^i) \\
& \leqslant & \|\na_{x_ix_j}^2 W_{ij}\|_\infty \co \mathcal W_1\po \nu_t^j,\hat \nu_t^j\pf \mathcal W_1\po \nu_t^i,\rho_{t}^i \pf +  \mathcal W_1\po \nu_t^j,\rho_{t}^j\pf \mathcal W_1\po \nu_t^i,\hat\nu_t^i\pf   \cf  \,.
\end{eqnarray*}
Summing~\eqref{uneeq} over $i\in\cco 1,K\ccf$ and using the last two estimates, one has that 
\[
    \partial_t \mathcal H\po \hat\nu_t | \rho_t \pf  \leqslant   \alpha \mathcal H\po \nu_t | \rho_t \pf - \alpha \mathcal H\po \hat\nu_t | \rho_t \pf + \frac{2\alpha}{\tau}\sum_{i=1}^K\sum_{j\neq i} \|\na_{x_ix_j}^2 W_{ij}\|_\infty  \mathcal W_1\po \nu_t^j,\hat \nu_t^j\pf \mathcal W_1\po \nu_t^i,\rho_{t}^i \pf\,.
\]
For distinct $i$ and $j$, we bound, for any $\theta'>0$,
\begin{eqnarray*}
2\mathcal W_1\po \nu_t^j,\hat \nu_t^j\pf \mathcal W_1\po \nu_t^i,\rho_{t}^i \pf & \leqslant & 2\po \mathcal W_1\po \nu_t^j,\rho_{t}^j \pf  + \mathcal W_1\po \rho_{t}^j,\hat \nu_t^j\pf \pf \mathcal W_1\po \nu_t^i,\rho_{t}^i \pf \nonumber \\
& \leqslant &   \mathcal W_1^2 \po \nu_t^j,\rho_{t}^j \pf + \frac{1}{\theta'} \mathcal W_1^2\po \rho_{t}^j,\hat \nu_t^j\pf + (1+\theta') \mathcal W_1^2\po \nu_t^i,\rho_{t}^i \pf\,.%\\ %\label{intermed-bound}
%& \leqslant &  4\kappa  \mathcal H \po \nu_t^j,\rho_{t}^j \pf + \frac{1}{\theta'} \mathcal W_1^2\po \rho_{t}^j,\hat \nu_t^j\pf + (1+\theta') \mathcal W_1^2\po \nu_t^i,\rho_{t}^i \pf\\
\end{eqnarray*}
%for any $\theta'>0$.
Plugging this in the previous inequality and then using the Talagrand inequalities for $\rho_{t}^i$ leads to
\[\partial_t  \mathcal H\po \hat\nu_t | \rho_t \pf \leqslant - \alpha \po 1 - \frac{2\kappa   M_2}{\tau \theta'} \pf  \mathcal H\po \hat\nu_t | \rho_t \pf  +  \alpha \po 1+ \frac{2\kappa   M_2      (2+ \theta')}{\tau} \pf  \mathcal H\po \nu_t | \rho_t \pf \,.  \] 
Taking $\theta' = 4\kappa  M_2 /\tau $, writing $M''= 1+ 2\kappa   M_2       (2+ \theta')/\tau $ and summing this inequality with \eqref{eq:rtRt}  we get, for any $\theta>0$, 
\[ 
\partial_t s_t \leqslant   \po (2+\theta) \alpha M' + \alpha M'' - \frac{\tau }{\kappa }\pf \mathcal H\po \nu_t | \rho_t \pf  +     \alpha\po \frac{ M'}\theta - \frac12\pf  \mathcal H\po \hat\nu_t | \rho_t \pf   \,.
\]
Taking $\theta = 4M'$  and assuming that
\[\alpha \leqslant \frac{\tau}{2\kappa \po (2+\theta)   M' + M'' \pf } =  \frac{\tau}{4\kappa \po (2+8\kappa M_2/\tau )   \kappa M_2/\tau  +  1+ 2\kappa   M_2       (2+ 4\kappa M_2/\tau)/\tau \pf }\,,\]
which is implied by $\alpha \leqslant \alpha_0$ with $\alpha_0$ given by \eqref{eq:kappa}, we get
\begin{equation}
    \label{eq:rhatr}
    \partial_t s_t   \leqslant -  \frac{\tau }{2\kappa }  \mathcal H\po \nu_t | \rho_t \pf   - \frac{\alpha}4    \mathcal H\po \hat \nu_t | \rho_t \pf  \leqslant - \lambda s_t\,, 
\end{equation}
which concludes the proof of the proposition. 
\end{proof}

\begin{proof}[Proof of Theorem \ref{thm: bdnustat}] 
First, it follows from Proposition \ref{prop:NEmulti} that 
$$
\begin{aligned}
    \sum_{i=1}^K   \mathcal H \po \hat \nu_t^i|\nu^{i,\ast}_\tau \pf   \leq \tau^{-1} \NI_{\tau}(\hat\nu_t) = \tau \mH(\hat\nu_t | \rho_t),
\end{aligned}
$$
where we have used \eqref{eq:NIhat} in the last identity. Therefore the first estimate of the theorem follows directly from above and Proposition \ref{prop:Lyapunov-multi}. For the second one, by the triangular and Pinsker inequality, 
\begin{align*}
    \sum_{i=1}^K \|\nu_t^i - \nu^{i,\ast}_\tau\|_{TV}^2 & \leqslant \sum_{i=1}^K  \po \|\nu_t^i - \rho_{t}^i \|_{TV} + \| \rho_{t}^i -  \hat \nu_t^i \|_{TV} + \|\hat \nu_t^i - \nu^{i,\ast}_\tau\|_{TV}  \pf^2 \\
    &\leqslant 6 \sum_{i=1}^K\co  \mathcal H\po   \nu_t^i |   \rho_{t}^i  \pf     +  \mathcal H\po   \hat \nu_t^i |   \rho_{t}^i  \pf     +  \mathcal H\po  \hat \nu_t^i |   \nu^{i,\ast}_\tau  \pf  \cf  \\
    &\leqslant 12 s_t \,. 
\end{align*}
Therefore the second estimate follows again from Proposition~\ref{prop:Lyapunov-multi} and the first estimate.
\end{proof}

\subsection{Proofs for results in Section \ref{sec:anneal}} \label{sec:prooffor23}

\begin{proof}[Proof of Proposition \ref{prop:annealing-multi}]
We decompose 
\[\partial_t (s_{t,\tau_t}) = (\partial_u)_{|u=t} s_{u,\tau_t} + (\partial_u)_{|u=\tau_t} s_{t,u} \,. \]
The second term is 
\begin{equation}
     \label{eq:annealed1}
     (\partial_u)_{|u=\tau_t} s_{t,u} =  - \frac{\partial_t  \tau_t}{\tau_t^2}     \sum_{i=1}^K \int_{\T^d}   U_i\star \hat\nu_t^{-i}  \po 2 \rho_{t}^i - \nu_t^i - \hat\nu_t^i\pf  \leqslant \frac{4\delta M_1}{c_0+t }     \,. 
\end{equation}
The first term is treating as in  the proof of Proposition~\ref{prop:Lyapunov-multi}, where \eqref{eq:rhatr} becomes
\begin{equation}
     \label{eq:annealed2} 
     (\partial_u)_{|u=t} s_{u,\tau_t}  \leqslant - \lambda_t s_{t,\tau_t}
 \end{equation}
 as soon as $\alpha_t \leqslant \bar\alpha_{0,t}$ with
\begin{equation*}
\kappa_t =\frac{\exp\po  \tau_t^{-1} M_0\pf  }{8 \pi^2 }\,,\qquad \lambda_t = \min\po \frac{\tau_t}{2\kappa_t},\frac{\alpha_t}{4}\pf \,,\qquad \bar \alpha_{0,t}= \frac{\tau_t}{53 \kappa_t \max(1,(\kappa_t M_2/\tau_t)^2) }
 \,.%\label{eq:kappa-multi}
 \end{equation*} 
 Let us check that the conditions~\eqref{eq:cond_annealing-multi} and \eqref{eq:cond_annealing-multi-c0} ensure that $\alpha_t \leqslant \bar\alpha_{0,t}$ for all $t\geqslant 0$. First,
\[\frac{\kappa_t M_2 }{\tau_t } =\frac{\delta  M_2 }{8 \pi^2 } (c_0+t)^{\delta M_0} \ln(c_0+t) \geqslant 1 \,, \]
so that
\[ \frac{\alpha_t}{\bar \alpha_{0,t}}= \frac{53 \alpha_t \kappa_t^3   M_2^2 }{\tau_t^3 } = \frac{53 \delta^3 \beta M_2^2 }{(8 \pi^2)^3 } (c_0+t)^{3\delta M_0-1/2} \ln^3(c_0+t)\,.  \]
Using \eqref{eq:cond_annealing-multi} and checking (e.g. numerically) that $x^{-1/4}\ln^3(x) \leqslant 90$ for all $x\geqslant 1$, we get that 
\[ \frac{\alpha_t}{\bar \alpha_{0,t}} \leqslant \frac{\delta^3 \beta M_2^2 }{3 } \leqslant  1 \,,  \]
as required. Moreover, $\alpha_t \leqslant \bar\alpha_{0,t} \leqslant \tau_t / (53\kappa_t)$, which means that $\lambda_t = \alpha_t/4$. 

Gathering~\eqref{eq:annealed1} and \eqref{eq:annealed2}, we have thus obtained 
 \[\partial_t s_t \leqslant - \frac{\beta}{4\sqrt{c_0+t}}s_t + \frac{4\delta M_1 }{c_0+t}\,.\]
 For any $C>0$,
\begin{eqnarray*}
\partial_t \po s_t  - \frac{C}{\sqrt{c_0+t}}\pf  &\leqslant& - \frac{\beta}{4\sqrt{c_0+t}}s_t  + \frac{4\delta M_1}{c_0+t} + \frac{3C}{2(c_0+t)^{3/2}}  \\
& \leqslant & - \frac{\alpha_t}{4} \po s_t - \frac{C}{\sqrt{c_0+t}}\pf
\end{eqnarray*}
provided 
\[ \frac{4\delta M_1 }{c_0+t} + \frac{3C}{2(c_0+t)^{3/2}} \leqslant \frac{C\beta}{4(c_0+t)}\,,\]
which holds true thanks to the first part of~\eqref{eq:cond_annealing-multi-c0} and by taking
\[C = \frac{32\delta M_1 }{\beta}\,.\]
Then, we conclude by
\[s_t  \leqslant \frac{C}{\sqrt{c_0+t}}  + e^{-\int_0^t \frac{\alpha_u}{4}\dd u }s_0  =  \frac{C}{\sqrt{c_0+t}} + e^{-\frac{\beta}{2}\po \sqrt{c_0 + t} - \sqrt{c_0}\pf }s_0 \]
\end{proof}

\begin{proof}[Proof of Theorem~\ref{thm:proxygibbs}]
    The argument is the same as in the proof of Theorem~\ref{thm: bdnustat}: using Proposition~\ref{prop:NEmulti} with the triangular and Pinsker inequalities,
    \begin{align*}
        \sum_{i=1}^K \|\nu^i_t - \nu^{i,\ast}_{\tau_t}\|_{TV}^2 &\leqslant 3\sum_{i=1}^K \co \|\nu^i_t - \rho_{t,\tau_t}^i \|_{TV}^2 + \|\rho^i_{t,\tau_t} - \hat\nu^i_t \|_{TV}^2 + \|\hat \nu^i_t - \nu^{i,\ast}_{\tau_t}\|_{TV} ^2 \cf \\
        & \leqslant 6 \sum_{i=1}^K \co \mathcal H\po \nu^i_t | \rho_{t,\tau_t}^i\pf + \mathcal H\po \hat \nu^i_t | \rho_{t,\tau_t}^i\pf+  \mathcal H\po \hat \nu^i_t | \nu^{i,\ast}_{\tau_t}\pf \cf \\
        &\leqslant 12 s_{t,\tau_t}\,.
    \end{align*}
    The estimate of the theorem follows from Proposition~\ref{prop:annealing-multi}, Proposition \ref{prop:NEmulti} and the fact that the exponential term on the right side of \eqref{eq:stau} is dominated by the first term for large $t$.
\end{proof}

Next, we turn to the proof of main  Theorem \ref{thm:mainanneal} which establishes the convergence of the annealed mean-field dynamics under the NI-error. To this end, recall that $\{\nu^{i,\ast}_{\tau}\}_{i=1}^K$ is the MNE associated to the entropy-regularized energies  $
\{\mE_{i,\tau}\}_{i=1}^K$. Recall also the Nikaid\^o-Isoda error associated to the unregularized energies $\{\mE_i\}_{i=1}^K$:
$$
\NI(\{ \nu^{i,\ast}_{\tau}\}_{i=1}^K) = \sum_{i=1}^K \mE_i(\nu^{i,\ast}_{\tau}, \nu^{- i,\ast}_{\tau}) - \inf_{\nu^i} \mE_i(\nu^{i}, \nu^{- i,\ast}_{\tau}) 
.$$ 
For a fixed $\epsilon>0$, we say that $\nu = (\nu^i)_{i=1}^K$ is an $\epsilon$-MNE of  $\{\mE_i\}_{i=1}^K$ if 
$$
\NI(\nu) \leq \epsilon.
$$
The following lemma is useful to bound $ \NI(\{ \nu^{i,\ast}_{\tau}\}_{i=1}^K)$, which generalizes \cite[Theorem 5]{domingo2020mean} and \cite[Lemma A.4]{lu2023two} for two-player games to the multi-player setting. 

\begin{lemma}\label{lem:laplacem} Under Assumption \ref{ass-multi}, there exists a constant $C>0$ depending only on $M_0, L, d_{i},i=1,\cdots,K$ such that $\{ \nu^{i,\ast}_{\tau}\}_{i=1}^K$ is an $\epsilon$-MNE of $\{\mE_i\}_{i=1}^K$ with $\epsilon>0$ if 
$$
\tau \leq \frac{C\epsilon}{\log \epsilon^{-1}}
$$
Alternatively, if $\tau$ is sufficiently small, then $\{ \nu^{i,\ast}_{\tau}\}_{i=1}^K$ is an $\epsilon$-MNE with 
\begin{equation}\label{eq:epsNashm}
  \epsilon = \beta \tau(\log (1/\tau)) \text{ for } \beta > \max_i d_{\mX_i} +1.
\end{equation}

\end{lemma}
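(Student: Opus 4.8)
The plan is to reduce everything to a low-temperature Laplace estimate carried out player by player, in the spirit of the two-player arguments of \cite[Theorem 5]{domingo2020mean} and \cite[Lemma A.4]{lu2023two}. First I would set $g_i := U_i \star \nu^{-i,\ast}_\tau$ and $h_i := g_i - \min_{\mX_i} g_i \geqslant 0$ for each $i\in\cco 1,K\ccf$; by Assumption~\ref{ass-multi} one has $0\leqslant h_i\leqslant 2M_0$ and $\|\nabla h_i\|_\infty\leqslant L$, and by the fixed point equation~\eqref{eq:fixedpointmulti} one has $\nu^{i,\ast}_\tau\propto e^{-h_i/\tau}$. Since $\mE_i(\mu^i,\nu^{-i,\ast}_\tau)=\int_{\mX_i} g_i\,\dd\mu^i$ for every $\mu^i\in\mP(\mX_i)$ and $\mX_i$ is compact with $g_i$ continuous, the infimum over $\mP(\mX_i)$ is attained at a Dirac mass and equals $\min_{\mX_i} g_i$; hence
\[ \NI(\nu^\ast_\tau) \ =\ \sum_{i=1}^K \left( \int_{\mX_i} g_i\,\dd\nu^{i,\ast}_\tau - \min_{\mX_i} g_i \right) \ =\ \sum_{i=1}^K \frac{\int_{\mX_i} h_i\, e^{-h_i/\tau}\,\dd x_i}{\int_{\mX_i} e^{-h_i/\tau}\,\dd x_i}\, , \]
which is nonnegative, and it remains to bound each summand.

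Fix $i$, pick a minimizer $x^\ast_i$ of $g_i$, and integrate $\nabla h_i$ along a minimizing geodesic issued from $x^\ast_i$ to get the Lipschitz bound $h_i(x)\leqslant L\,d_{\mX_i}(x,x^\ast_i)$ for all $x\in\mX_i$. For the denominator I would restrict the integral to a fixed small geodesic ball around $x^\ast_i$ on which $d_{\mX_i}(\cdot,x^\ast_i)$ coincides with the Euclidean distance, use $e^{-h_i/\tau}\geqslant e^{-L|x-x^\ast_i|/\tau}$ there and pass to polar coordinates, which gives $\int_{\mX_i} e^{-h_i/\tau}\,\dd x_i\geqslant c_{d_i}(\tau/L)^{d_i}$ for all $\tau$ below an explicit threshold depending only on $L$, with $c_{d_i}>0$ depending only on $d_i$. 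For the numerator I would fix a cutoff $\eta>0$ and split: on $\{h_i\leqslant\eta\}$ bound $h_i\leqslant\eta$, and on $\{h_i>\eta\}$ bound $h_i e^{-h_i/\tau}\leqslant 2M_0 e^{-\eta/\tau}$ together with $|\mX_i|=1$. Keeping $\int_{\mX_i}e^{-h_i/\tau}$ as a common factor, this yields the per-player estimate
\[ \int_{\mX_i} g_i\,\dd\nu^{i,\ast}_\tau - \min_{\mX_i} g_i \ \leqslant\ \eta\ +\ \frac{2M_0 L^{d_i}}{c_{d_i}}\,\tau^{-d_i}\,e^{-\eta/\tau}\ . \]

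It then remains to optimize $\eta$ and sum. For the second (``alternatively'') claim I would take $\eta=(d_i+1)\tau\log(1/\tau)$, so that the error term equals $\frac{2M_0 L^{d_i}}{c_{d_i}}\tau = o(\tau\log(1/\tau))$ and the $i$-th summand is at most $(d_i+1)\tau\log(1/\tau)(1+o(1))$ as $\tau\to0$; summing over the $K$ players gives $\NI(\nu^\ast_\tau)\leqslant\beta\tau\log(1/\tau)$ for all sufficiently small $\tau$, provided $\beta$ is larger than a constant depending only on the $d_i$ and $K$, which is the second assertion of the lemma (cf.~\eqref{eq:epsNashm}). For the first claim, given $\epsilon$ I would take $\eta=\epsilon/(2K)$ in every summand, obtaining $\NI(\nu^\ast_\tau)\leqslant\frac{\epsilon}{2}+KA\,\tau^{-D}e^{-\epsilon/(2K\tau)}$ with $D:=\max_i d_i$ and $A$ depending only on $M_0,L$ and the $d_i$; since $\log(1/\tau)\geqslant\log(1/\epsilon)$ once $\epsilon$ is small and $\tau\mapsto\frac{\epsilon}{2K\tau}-D\log(1/\tau)$ is decreasing on the relevant range of $\tau$, the second term is at most $\epsilon/2$ as soon as $\tau\leqslant C\epsilon/\log(1/\epsilon)$ for a sufficiently small constant $C$ depending only on $M_0,L$ and the $d_i$, which gives $\NI(\nu^\ast_\tau)\leqslant\epsilon$.

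The only genuinely delicate point is the partition-function lower bound $\int_{\mX_i}e^{-h_i/\tau}\,\dd x_i\geqslant c_{d_i}(\tau/L)^{d_i}$ holding \emph{uniformly} over the whole family of potentials $h_i$ produced as $\tau$ varies (recall that $g_i=U_i\star\nu^{-i,\ast}_\tau$ itself depends on $\tau$, and that no non-degeneracy of its minimizer is available); this uniformity is exactly what the Lipschitz bound $h_i(x)\leqslant L\,d_{\mX_i}(x,x^\ast_i)$ buys us, reducing the estimate to an explicit integral over the torus. The numerator split, the choice of $\eta$, and the bookkeeping of the thresholds on $\tau$ are then routine.
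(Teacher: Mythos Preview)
Your proof is correct and follows the same strategy as the paper: reduce the NI error to a sum of per-player Laplace integrals $\int h_i e^{-h_i/\tau}/\int e^{-h_i/\tau}$ via the fixed-point equation~\eqref{eq:fixedpointmulti}, then control each one using only the uniform Lipschitz bound $h_i(x)\leq L\,d_{\mX_i}(x,x_i^*)$ (which, as you rightly stress, is what makes the estimate uniform over the $\tau$-dependent family of potentials). The paper's sketch defers to the volume-of-ball argument of \cite[Theorem~5]{domingo2020mean}, restricting to the ball $B^\delta$ with $\delta=\epsilon/(2L)$ on which $h_i\leq\epsilon/2$ and comparing volumes inside and outside; your route---lower-bounding the partition function by $c_{d_i}(\tau/L)^{d_i}$ via polar coordinates and then splitting the numerator at level $\eta$---is a slightly different but entirely equivalent bookkeeping of the same Laplace-method idea, and has the advantage of being fully self-contained.

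One minor discrepancy: summing your per-player bound $(d_i+1)\tau\log(1/\tau)(1+o(1))$ over $i$ gives a threshold $\beta>\sum_i(d_i+1)$, which depends on $K$, rather than the $K$-independent $\beta>\max_i d_i+1$ stated in~\eqref{eq:epsNashm}. The paper's own sketch is terse enough on this point that the $K$-dependence may simply be absorbed into the phrase ``depending on $d_i,\,i=1,\dots,K$''; in any case this does not affect the downstream application in Theorem~\ref{thm:mainanneal}, where only the order $\tau\log(1/\tau)$ matters.
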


\begin{proof}
    The proof essentially follows from that of \cite[Theorem 5]{domingo2020mean}. We only highlight the key idea below. Define
    $$
    V_i(x_i) = \sum_{j\neq i} W_{ij} \star  \nu^{j,\ast}_\tau, i=1,\cdots, K.
    $$
    Then we can rewrite the NI error $\NI(\{ \nu^{i,\ast}_{\tau}\}_{i=1}^K) $ as 
    $$
    \NI(\{ \nu^{i,\ast}_{\tau}\}_{i=1}^K) = \sum_{i=1}^K \Big(\int V_i(x_i) \nu_{\tau}^{i,\ast}(dx_i) - \min_{x_i^\prime} V_i(x_i^\prime)\Big).
    $$
    Using  that $\{ \nu^{i,\ast}_{\tau}\}_{i=1}^K$ satisfies the fixed point equation 
    $$
    \nu^{i,\ast}_{\tau} \propto e^{-V_i(x)/\tau}, 
    $$
    one can follow the same arguments from the proof of \cite[Theorem 5]{domingo2020mean} to obtain that $\{ \nu^{i,\ast}_{\tau}\}_{i=1}^K$  is an $\epsilon$-Nash equilibrium provided that for any $i\in 1,\cdots, K$, 
$$
e^{\frac{\eps}{2\tau}}  \frac{\text{Vol}(B_{\mX_i}^\delta)}{1- \text{Vol}(B_{\mX_i}^\delta)}  \geq 2 (M_0/\epsilon - 1),
$$
where $B_{\mX_i}^\delta$ is the centered ball of radius $\delta$ on $\mX_i$ with $\delta = \epsilon/(2L)$. Thanks to the lower bound on the volume of small balls in $\mX_i = \mathbb{T}^{d_i}$, the inequality above holds if 
$$
e^{\frac{\epsilon}{2\tau}} \geq C_1  \epsilon^{-(\max_i d_{i} +1)}
$$
for some $C_1>0$ depending only on $M_0, L,d_{i}$. This proves that the choice $\tau \leq C\epsilon/\log \epsilon^{-1}$ is sufficient to guarantee that $\{ \nu^{i,\ast}_{\tau}\}_{i=1}^K$ is an $\epsilon$-MNE of $\{\mE_i\}_{i=1}^K$. 
Moreover, with the choice \eqref{eq:epsNashm} for $\epsilon$ with $\beta > \max_i d_{i} +1$, one has for $\tau$ sufficiently small that 
$$
\begin{aligned}
    e^{\frac{\eps}{2\tau}} \geq  \frac{1}{\tau^{\beta}} > \frac{C_1}{\beta^{\max_i d_{i}+ 1} }  \Big(\frac{1}{\tau \log(1/\tau)}\Big)^{\max_i d_{i} + 1} = C_1 \epsilon^{-(\max_i d_{i} +1)}.
\end{aligned}
$$

\end{proof}

With the help of Theorem \ref{thm:proxygibbs}  and Lemma \ref{lem:laplacem}, we are ready to present the proof of Theorem \ref{thm:mainanneal}. 

\begin{proof}[Proof of Theorem \ref{thm:mainanneal}] 
  First of all, note that 
    $$
    \begin{aligned}
         & \NI(\nu_t) -  \NI(\nu_{\tau_t}^\ast) \\
         & = \sum_{i=1}^K \Big( \mE_i(\nu^{i}_{t}, \nu^{-i}_{t}) - \inf_{\nu^i} \mE_i(\nu^{i}, \nu^{- i}_{t}) \Big) - \sum_{i=1}^K \Big( \mE_i(\nu^{i,\ast}_{\tau_t}, \nu^{- i,\ast}_{\tau_t}) - \inf_{\nu^i} \mE_i(\nu^{i}, \nu^{- i,\ast}_{\tau_t}) \Big)\\
         & = \sum_{i=1}^K \Big(\mE_i(\nu^{i}_{t}, \nu^{- i}_{t}) - \mE_i(\nu^{i,\ast}_{\tau_t}, \nu^{- i,\ast}_{\tau_t})\Big) + \sum_{i=1}^K \Big(\inf_{\nu^i} \mE_i(\nu^{i}, \nu^{- i,\ast}_{\tau_t}) - \inf_{\nu^i} \mE_i(\nu^{i}, \nu^{- i}_{t})\Big) \\
         & =: I_1 + I_2
    \end{aligned}
    $$
    To bound $I_1$ for the difference of the NI-errors, note that for any $i\in 1,\cdots,K$, 
    $$
\begin{aligned}
    \mE_i(\nu^{i}_{t}, \nu^{- i}_{t}) - \mE_i(\nu^{i,\ast}_{\tau_t}, \nu^{- i,\ast}_{\tau_t})&
    = \mE_i(\nu^{i}_{t}, \nu^{- i}_{t}) - \mE_i(\nu^{i,\ast}_{\tau_t}, \nu^{- i}_{t}) +  \mE_i(\nu^{i,\ast}_{\tau_t}, \nu^{- i}_{t}) - \mE_i(\nu^{i,\ast}_{\tau_t}, \nu^{- i,\ast}_{\tau_t}) \\
    & \leq 2M_0 \|\nu^{i}_{t} - \nu^{i,\ast}_{\tau_t}\|_{TV}.
 \end{aligned}
    $$
    Similarly,  for $I_2$ we have that for any $i$,  
       $$
\begin{aligned}
\inf_{\nu^i} \mE_i(\nu^{i}, \nu^{- i,\ast}_{\tau_t}) - \inf_{\nu^i} \mE_i(\nu^{i}, \nu^{- i}_{t}) & \leq  \min_{x_i} \sum_{j\neq i} [W_{ij} \star (\nu^i_{t} - \nu^{i,\ast}_{\tau_t}) ](x_i) \\
& \leq M_0\|\nu^{i}_{t} - \nu^{i,\ast}_{\tau_t}\|_{TV}
     \end{aligned}
    $$
Combining the  estimates above, one has from Theorem \ref{thm:proxygibbs}  that there exists $C_1>0$ such that for large $t$, 
$$
\NI(\nu_t) -  \NI(\nu_{\tau_t}^\ast)  \leq C_1 (t+c_0)^{-1/4}.
$$
Furthermore, thanks to Lemma \ref{lem:laplacem} with the same choice of $\tau_t$, one has that for large $t$,
$$
 \NI(\nu_{\tau_t}^\ast) \leq \frac{C_2 \log\log t}{\log t},
$$
where $C_2>0$ depends on $M_0,L$ and $d_{i}, i\in\cco 1,K\ccf$. The theorem then follows directly from the last two estimates.

\end{proof}

\section{Conclusion and future work}
In this paper, we consider the approximation of the MNE of a multi-player zero-sum game with a time-averaged mean-field gradient flow dynamics.  We proved an exponential convergence in total variation metric for the constructed dynamics for finding the MNE of an entropy-regularized loss on the space of probability measures. We also proved that an annealed mean-field dynamics with a suitable cooling schedule converges to an MNE of the original unregularized game loss with respect to the Nikaid\` o-Isoda error.

We would like to highlight several interesting open directions for future research. First, our convergence analysis has been concentrated on a game objective defined by sum of pairwise zero-sum losses. It remains an interesting theoretical question to extend the  analysis to a general zero-sum loss that involves simultaneous interactions between multiple players. Next, although we have focused on the analysis on the tori, we anticipate that our proof strategies can be adapted to deal with game objectives  (with suitable confining condition) on unbounded state spaces. Furthermore, from a practical point of view, the mean field dynamics are often implemented by a certain interacting particle algorithm with a large number of particles. To perform a complete error analysis for the particle algorithm, it is crucial to establish a uniform-in-time quantitative error bound between the particle system and the mean field dynamics, which remains open. Lastly, our approach can potentially be extended to the general problem of sampling probability measures solving a fixed-point problem of the form
\[\nu \propto \exp (-F(\nu))\]
where $F$ is not the flat derivative of some energy. In that case, the  mean-field evolution naturally associated to this equation is not the Wasserstein gradient flow of an explicit free energy, so that standard mean-field entropy methods do not apply. However, with a time-averaged flow, the Lyapunov condition is explicitly given by relative entropies with respect to the  proximal-Gibbs measure.  We will investigate these questions in future works.

\section*{Acknowledgments}
YL thanks the support from the U.S. National Science Foundation  through the awards DMS-2343135 and
DMS-2436333. 
The research of PM is supported by the project  CONVIVIALITY (ANR-23-CE40-0003) of the French National Research Agency.

\bibliographystyle{siamplain}
\bibliography{references}

\newpage

\end{document}